\newtheorem{theorem}{Theorem}
\newtheorem{example}{Example}
\newtheorem{assumption}{Assumption}
\newtheorem{lemma}{Lemma}
\newtheorem{remark}{Remark}
\newtheorem{definition}{Definition}
\newtheorem{proposition}{Proposition}
\newcommand{\mc}{\mathcal}
\newcommand{\mb}{\mathbb}
\newcommand{\mr}{\mathrm}
\newcommand{\sgn}{\operatorname{sgn}}
\newcommand{\prox}{\operatorname{prox}}
\newcommand{\conv}{\operatorname{conv}}
\newcommand{\dom}{\operatorname{dom}}
\newcommand{\supp}{\operatorname{supp}}
\newcommand{\argmin}{\operatornamewithlimits{arg\,min}}
\newcommand{\argmax}{\operatornamewithlimits{arg\,max}}
\newenvironment{keywords}{\par\vspace{0.5\baselineskip}\noindent\textbf{Keywords:} }{\par}
\begin{document}
\title{\bf A Perturbed DCA for Computing d-Stationary Points of Nonsmooth DC Programs\footnotemark[1]}
\author{Zhangcheng Feng\footnotemark[2] \; and 
Yancheng Yuan\footnotemark[3]}
\date{31 July, 2026}
\maketitle

\renewcommand{\thefootnote}{\fnsymbol{footnote}}
\footnotetext[1]{{\bf Funding:} The work of Yancheng Yuan was supported by the RGC Early Career Scheme (Project No.
25305424) and the Research Center for Intelligent Operations Research.}
\footnotetext[2]{Department of Applied Mathematics, The Hong Kong Polytechnic University, Hung Hom, Hong Kong ({\tt zhangcheng.feng@connect.polyu.hk}).}
\footnotetext[3]{Department of Applied Mathematics, The Hong Kong Polytechnic University, Hung Hom, Hong Kong (\textbf{Corresponding author}. {\tt yancheng.yuan@polyu.edu.hk}).}
\renewcommand{\thefootnote}{\arabic{footnote}}
\begin{abstract}
This paper introduces an efficient perturbed difference-of-convex algorithm (perturbed DCA) for computing d-stationary points of an important class of structured nonsmooth difference-of-convex problems. Compared to the principal algorithms introduced in [J.-S. Pang, M. Razaviyayn, and A. Alvarado, Math. Oper. Res. 42(1):95--118 (2017)], which may require solving several subproblems for a one-step update, perturbed DCA only requires solving a single subproblem. Therefore, the per-iteration computational cost of perturbed DCA is comparable to the widely used difference-of-convex algorithm (DCA) introduced in [D. T. Pham and H. A. Le Thi, Acta Math. Vietnam. 22(1):289--355 (1997)] for computing a critical point. {We establish the subsequential and almost sure convergence of the perturbed DCA to d-stationary points under certain conditions. To decouple the perturbation radii from the local convergence rate of the iterates, we further propose a hybrid variant of the perturbed DCA that independently samples the perturbation radius and direction with a safeguard using a proximal DCA step.}
% {When the perturbation radius is large enough for the perturbed points to reach every relevant active region with positive probability, we establish the almost surely subsequential convergence of perturbed DCA to d-stationary points. To remove the coupling between the perturbation radii and the local convergence rate of the iterates, we further propose a hybrid variant with randomized perturbation radii and a fallback proximal DCA step.
% % The randomized radii provide multiscale exploration near zero, while the candidate update is centered at the perturbed point and accepted only when it yields sufficient decrease.
% }
Importantly, under more relaxed and practical assumptions, we prove that every accumulation point of the sequence generated by the {hybrid} perturbed DCA is a d-stationary point almost surely. Numerical results on several important examples demonstrate the efficiency of the proposed methods for computing d-stationary points.
\end{abstract}

% REQUIRED
\begin{keywords}
difference-of-convex programming, directional stationary point, nonconvex optimization, nonsmooth optimization.
\end{keywords}

\section{Introduction}
This paper considers a class of difference-of-convex (DC) programming problems of the following form:
\begin{equation}\label{eq:dc_general}
    \min_{x \in \mathbb{R}^n}~\zeta(x) := \phi(x) - \psi(x), \quad \psi(x) := \max_{i \in \mathcal{I}}~\{\psi_i(x)\},
\end{equation}
where $\mathcal{I}$ is a finite index set, $\phi: \mathbb{R}^n \to (-\infty, +\infty]$ is a proper closed convex function, and for each $i \in \mathcal{I}$, $\psi_i: \mathbb{R}^n \to \mathbb{R}$ is convex and continuously differentiable. We assume that the functions $\{\psi_i\}_{i\in\mc I}$ are pairwise distinct. This formulation provides a unified framework for modeling and solving a broad class of nonconvex optimization problems that arise in various applications, including sparse recovery \cite{ahn2017dclearning,gotoh2018dcsparse}, statistical estimation \cite{cui2018composite}, and power allocation in digital communication systems \cite{pang2017computing}. We refer readers to \cite{tao1997convex,horst1999dcoverview,le2018dc30years,le2024open, cui2021modern,facchinei2007finite} and references therein for more details on DC programming.

\begin{algorithm}[htbp]
\caption{Difference-of-Convex Algorithm (DCA) \cite{tao1997convex,an2005dc}}
\label{alg:DCA}
\begin{algorithmic}[1]
\setlength{\abovedisplayskip}{2pt}
\setlength{\belowdisplayskip}{2pt}
\REQUIRE $x^0 \in \mathbb{R}^n$.
\FOR{$k=0,1,\cdots$}
    \STATE \textbf{Step 1}. Select a subgradient $g^k \in \partial \psi(x^k)$.
    \STATE \textbf{Step 2}. Update
    \begin{equation}
    \label{eq: dca_update}
    x^{k+1} \in \argmin_{x \in \mathbb{R}^n} \left\{ \phi(x) - \psi(x^k) - \langle g^k, x - x^k \rangle \right\}.
    \end{equation}
\ENDFOR
% \RETURN $x^{k+1}$.
\end{algorithmic}
\end{algorithm}

The standard difference-of-convex algorithm (DCA) \cite{tao1997convex,an2005dc} shown in Algorithm~\ref{alg:DCA} is one of the most widely used approaches for solving DC problems due to its simplicity and ease of implementation. At each iteration $k \ge 0$, DCA linearizes the concave part of the objective and computes the next iterate by solving the convex subproblem \eqref{eq: dca_update}. Simpler subproblems can be obtained by using a specific DC decomposition described in \cite{tao1998dctrustregion}. Specifically, one can always strongly convexify the functions $\phi$ and $\psi$ by adding the same strongly convex functions to both components without changing the objective function $\zeta$ in \eqref{eq:dc_general}. This idea has been adopted in \cite{gotoh2018dcsparse} to propose the so-called proximal DCA, which solves the following strongly convex subproblem for updating $x^{k+1}$ at iteration $k$:
$$ x^{k+1} = \argmin_{x\in\mb R^n} \left\{\phi(x) - \psi(x^k) - \langle g^k, x-x^k \rangle + \frac{\sigma}{2}\|x - x^k\|^2 \right\}, $$
where $\sigma>0$ is given. Wen et al. \cite{wen2018pdcae} further incorporated an extrapolation step into the proximal DCA to achieve possible acceleration. The resulting algorithm exhibits better numerical performance in some applications.

While DCA and its variants are popular and demonstrate good practical performance in many applications, they only guarantee to converge to some \textit{critical points} in general. To address this issue, Pang et al.~\cite{pang2017computing} advocate for the stronger concept of \emph{directional-stationary (d-stationary) points}, where the directional derivative is nonnegative in every feasible direction, and is arguably the sharpest kind among the various stationary solutions for the DC programs. One can refer to \cite{pang2017computing, cui2021modern} and the references therein for a more detailed discussion. Moreover, they proposed a novel revised DCA \cite[Alg.~1]{pang2017computing} shown in Algorithm~\ref{alg:Pang} to compute d-stationary points of the nonsmooth DC programs \eqref{eq:dc_general} by developing the \textit{relaxed active index set}. In particular, at each iteration $k$, instead of solving a single subproblem in DCA, the revised algorithm will solve all strongly convex linearized subproblems with proximal term~\eqref{eq: subproblem_pang} identified by an $\epsilon$-active index set~\eqref{eq: epsilon-activeset}, and update $x^{k+1}$ by selecting the best one with smallest ``proximal objective value''. Such an $\epsilon$-active index set can identify all active pieces at the cluster point, which can contribute to guaranteeing a subsequential convergence to a d-stationary point of the nonsmooth DC programming problem~\eqref{eq:dc_general}. However, the computational cost for a one-step update could be expensive or even prohibitive for large-scale problems. On the one hand, identifying the $\epsilon$-active index set is not easy in general for large-scale problems. On the other hand, when the $\epsilon$-active index set $\mathcal{M}_\epsilon(x^k)$ is large, it requires solving multiple large-scale subproblems. To mitigate the computational challenge, randomized active-index methods were developed in \cite[Sec.~5.2]{pang2017computing} {and \cite[Alg.~2]{van2019non}}, which randomly select a single index from $\mathcal{M}_\epsilon(x^k)$ per iteration and solve the corresponding subproblem.
% Pang et al. \cite{pang2017computing} also proposed a randomized variant (in \cite[Sec.~5.2]{pang2017computing}) that selects a single index from $\mathcal{M}_\epsilon(x^k)$ per iteration and solves the corresponding subproblem. 
Moreover, a safeguard strategy of \textit{sufficient-decrease condition} is included to ensure the almost sure convergence to a d-stationary point of~\eqref{eq:dc_general}. When $\mc M_\epsilon(x^k)$ is large but only a few indices yield candidates satisfying this condition, a random draw has a low probability of selecting such an index. The resulting inefficient sampling can lead to many rejected candidates and substantially increase the total number of subproblems to be solved.
% {Another closely related randomized active-index framework was developed by Ackooij and Oliveira \cite[Alg.~2]{van2019non} for nonsmooth DC-constrained DC programming, using inexact solutions of subproblems. However, it still relies on relaxed active index sets and a sufficient-decrease test to accept a candidate update.}

\begin{algorithm}[htbp]
\caption{A revised DCA for computing d-stationary points of \eqref{eq:dc_general}~\cite[Alg.~1]{pang2017computing}}
\label{alg:Pang}
\begin{algorithmic}[1]
\setlength{\abovedisplayskip}{2pt}
\setlength{\belowdisplayskip}{2pt}
\REQUIRE $x^0 \in \mathbb{R}^n$, $\epsilon > 0$.
\FOR{$k=0,1,\cdots$}
    \STATE Identify the $\epsilon$-active index set:
    \begin{equation}
    \label{eq: epsilon-activeset}
    \mathcal{M}_\epsilon(x^k) := \{ i \in \mathcal{I} \mid \psi_i(x^k) \ge \psi(x^k) - \epsilon \}.
    \end{equation}
    \FOR{$i \in \mathcal{M}_\epsilon(x^k)$}
        \STATE Compute
        \begin{equation}
        \label{eq: subproblem_pang}
        \hat{x}^{k,i} = \argmin_{x \in \mathbb{R}^n} \left\{ \phi(x) - \psi_i(x^k) - \langle \nabla \psi_i(x^k), x - x^k \rangle + \frac{1}{2} \|x - x^k\|^2 \right\}.
        \end{equation}
    \ENDFOR
    \STATE Find
    \begin{equation*}
    \hat{i} \in \argmin_{i \in \mathcal{M}_\epsilon(x^k)} \left\{ \zeta(\hat{x}^{k,i}) + \frac{1}{2} \|\hat{x}^{k,i} - x^k\|^2 \right\}.
    \end{equation*}
    \STATE Update $x^{k+1} = \hat{x}^{k,\hat{i}}$.
\ENDFOR
% \RETURN $x^{k+1}$.
\end{algorithmic}
\end{algorithm}

While numerous enhanced versions of Algorithm \ref{alg:Pang} have been proposed and successfully computed d-stationary points across various scenarios (e.g., \cite{pang2018decomposition,zhang2026data,cui2018composite,sun2024hybrid,lu2019enhanced}), they indeed depend on the relaxed active index set technique, which remains computationally expensive for solving large-scale problems. {In addition, the relaxed active index set technique can be combined  with proximal stabilization or bundle ideas, such as the stability-center mechanism in \cite[Alg.~2]{de2019proximal}. Such proximal/stabilized variants can keep the $\epsilon$-active index set fixed until a serious step is accepted, and hence may reduce the frequency of recomputing relaxed active index sets. Nevertheless, these variants still require constructing or maintaining the corresponding $\epsilon$-active index sets and typically test a sufficient-decrease condition at each iteration.} Therefore, designing a new algorithm that can balance the tradeoff between the theoretical sharpness and the computational efficiency for solving large-scale nonsmooth DC problem~\eqref{eq:dc_general} is highly desirable.

% In this paper, we address this challenge by proposing a (randomly) perturbed difference-of-convex algorithm \textbf{(pDCA)}, which can guarantee subsequential convergence to a d-stationary point of the nonsmooth DC program \eqref{eq:dc_general} almost surely. Our key innovation is, at each iteration $k$, instead of linearizing the concave component $\psi(\cdot)$ in \eqref{eq:dc_general} at $x^k$, we linearize $\psi(\cdot)$ at a (tiny) randomly perturbed point $\hat{x}^k$, where the set of active gradients reduces to a singleton almost surely due to Rademacher's theorem \cite{Rademacher1919,rw1998}. This allows pDCA to only solve a single strongly convex subproblem at each iteration.
In this paper, we address this challenge by first proposing a (randomly) perturbed difference-of-convex algorithm \textbf{(perturbed DCA)}. The key idea of perturbed DCA is to linearize the convex component $\psi(\cdot)$ at a randomly perturbed point $\hat{x}^k$, rather than at the current iterate $x^k$. By Rademacher’s theorem~\cite{Rademacher1919,rw1998}, the active-gradient set at $\hat{x}^k$ is a singleton almost surely. 
% Our key innovation is, at each iteration $k$, instead of linearizing the concave component $\psi(\cdot)$ in \eqref{eq:dc_general} at $x^k$, we linearize $\psi(\cdot)$ at a randomly perturbed point $\hat{x}^k$, where the set of active gradients reduces to a singleton almost surely due to Rademacher's theorem~\cite{Rademacher1919,rw1998}. 
This allows perturbed DCA to only solve a single strongly convex subproblem at each iteration. {Unlike randomized (relaxed) active-index methods (e.g., \cite[Sec.~5.2]{pang2017computing}, \cite[Alg.~2]{van2019non}), perturbed DCA does not first construct an $\epsilon$-active index set and then sample an index from it. The random perturbation implicitly selects a unique active gradient at a nearby differentiability point almost surely. 
% Moreover, because our convergence analysis allows nonmonotone objective values, pDCA does not need to test a sufficient-decrease condition to accept each candidate update, thereby avoiding the additional function value oracle calls associated with such tests.
We show that, under certain conditions, the perturbed DCA has subsequential and almost sure convergence to a d-stationary point. In particular, it requires a relative-rate condition: along a suitable subsequence converging to a cluster point \(x^*\), the distance \(\|x^k-x^*\|\) must be bounded by a constant multiple of the perturbation radius \(\alpha_k\). This condition ensures that the random perturbation can reach each relevant active region near \(x^*\) with positive probability, but it couples the choice of \(\alpha_k\) with the unknown local behavior of the iterates, thus is challenging to verify in practice.
% when the perturbation radius is large enough for the perturbed points to fall into each relevant active region near the cluster point with positive probability.
To bypass this requirement, we further propose a hybrid variant of the perturbed DCA. This algorithm simultaneously randomizes the perturbation radius and direction, and incorporates a fallback proximal DCA step to ensure sufficient decrease. Consequently, it decouples the perturbation radii from the local convergence rate of the iterates while retaining the almost-sure d-stationarity guarantee.}

{The random perturbation mechanism in this paper is related in spirit to gradient sampling methods~\cite{burke2005gs,davis2022gs}. However, our methods can obtain stronger stationary points with much less computational cost. First, gradient sampling methods typically require multiple randomly perturbed differentiable points nearby (larger than the dimension of the problem) to approximate the Clarke subdifferential and then solve an auxiliary subproblem to compute a search direction. In contrast, our perturbed DCA only requires one perturbed point at which the active-gradient set of $\psi(\cdot)$ is a singleton, and then performs the update at this perturbed point.
% Therefore, our proposed methods avoid sampling and storing multiple gradients, and solving an additional direction-finding subproblem. 
Second, under practical assumptions, our methods converge subsequentially to a d-stationary point almost surely, whereas gradient sampling methods only converge to Clarke stationary points almost surely~\cite{burke2005gs,davis2022gs}. It is known that Clarke stationarity is weaker than d-stationarity \cite[Sec.~3.2]{pang2017computing}.
}

We summarize the main contributions of this paper as follows:
\begin{itemize}
    \item We design a simple yet effective algorithm called perturbed DCA for solving the nonsmooth DC program~\eqref{eq:dc_general}.
    % {The proposed pDCA uses a randomly perturbed point to select a singleton active gradient almost surely and solves only one strongly convex subproblem at each iteration.}
    % \item 
    The proposed perturbed DCA balances the theoretical sharpness and computational efficiency. In particular, perturbed DCA only solves a single strongly convex subproblem for a one-step update, and it guarantees subsequential convergence to a d-stationary point almost surely {under certain conditions}.
    \item {To decouple the perturbation radii from the local convergence rate of the iterates, we propose a hybrid variant of perturbed DCA with randomized perturbation radii and a fallback proximal DCA step. The randomized radii enable the method to sample every relevant active region at arbitrarily fine scales, while the sufficient-decrease safeguard preserves monotonic descent. We prove that every accumulation point is d-stationary almost surely under more relaxed and practical
assumptions}.
    \item Extensive numerical experiment results on sparse regression and clustering problems demonstrate the efficiency of {the proposed perturbed DCA and its hybrid variant} for computing d-stationary points of \eqref{eq:dc_general}.
\end{itemize}

The remainder of this paper is organized as follows. Section~\ref{sec:DC} introduces some necessary preliminaries and motivating examples. Section~\ref{sec:pDCA} introduces the proposed perturbed DCA and establishes its convergence guarantees. {Section~\ref{sec:hybrid-pDCA} presents the hybrid variant of perturbed DCA.} Numerical experiment results are presented in Section~\ref{sec:numerical}, and we conclude the paper in Section~\ref{sec:conclusion}.

\paragraph{Notations} Let $\mb N$ denote the set of natural numbers. Given an integer $n\geq1$, let $[n]:=\{1,\ldots,n\}$. Denote by $\mb R^n$ the $n$-dimensional real Euclidean space, and $\mb S^{n-1}:=\{x\in\mb R^n \mid \|x\|=1 \}$ the unit sphere. Let $\conv(X)$ denote the convex hull of the set $X$. Let $x\in\mb R^n$ be any given vector. Denote $\sgn(x)=[\sgn(x_1),\ldots,\sgn(x_n)]^\top$ the sign vector of $x$ and $\supp(x)=\{i\in[n]\mid x_i\neq0\}$ the support set of $x$. Denote the $\ell_0$-norm of $x$ by $\|x\|_0$, which is the number of non-zero elements in $x$. For any $p \geq 1$, denote the $\ell_p$-norm of $x$ as $\|x\|_p := \sqrt[p]{\sum_{i = 1}^n |x_i|^p}$. For any integer $1 \leq k \leq n$, denote the vector $k$-norm of $x$ as $\|x\|_{(k)} := \sum_{1 \leq i \leq k} |x|^{\downarrow}_i$, where $|x|^{\downarrow}$ is the vector obtained by
rearranging the absolute value of the coordinates of $x$ in non-increasing order. For any convex function $f:\mb R^n\to(-\infty,\,+\infty]$, denote its domain by $\dom(f)$, its subdifferential at $x\in\mb R^n$ as $\partial f(x):=\{g\in\mb R^n\mid f(y)-f(x)\geq\langle g, y-x \rangle,\,\forall y\in\mb R^n\}$, and its proximal mapping as $\prox_{\tau f}(x):=\argmin_{y\in\mb R^n}\{f(y)+\frac{1}{2\tau}\|x-y\|^2\}$, $\tau>0$. If $f$ is differentiable at $x$, $\partial f(x)=\{\nabla f(x)\}$, where $\nabla f(x)$ is the gradient of $f$ at $x$.

\section{Preliminaries and Motivating Examples}\label{sec:DC}

The DC program~\eqref{eq:dc_general} provides a unified framework to model a wide class of nonsmooth and nonconvex optimization problems. We start with some interesting motivating examples to partially illustrate the applicability of the formulation \eqref{eq:dc_general}.

\begin{example}[$K$-sparse regularized problems~{\cite{gotoh2018dcsparse,ahn2017dclearning}}]\label{ex:ksparsity}
Sparse learning with cardinality constraint has been widely explored, while finding an optimal solution in general is known to be NP-hard due to the combinatorial nature of the cardinality constraints. Using the fact that $\|x\|_0 \leq K$ is equivalent to $\|x\|_1 - \|x\|_{(K)} = 0$ for any $x \in \mathbb{R}^n, \; 1 \leq K \leq n$, the following $K$-sparse regularized problem has been extensively investigated recently:
\begin{equation}\label{eq:dc_ksparse}
    \min_{x\in\mb R^n}\quad f(Ax) + \lambda(\|x\|_1-\|x\|_{(K)}),
\end{equation}
where $A\in\mb R^{m\times n}$, $f: \mathbb{R}^m \to \mathbb{R}$ is a closed convex loss function, and $\lambda>0$ is a given parameter. In particular, when taking $f(y) = \frac{1}{2}\|y - b\|^2 \; \forall y \in \mathbb{R}^m$ for some given $b \in \mathbb{R}^m$, it becomes the $K$-sparse regularized linear regression problem
\begin{equation}\label{eq:dc_ksparse-lr}
    \min_{x\in\mb R^n}\quad \frac12\|Ax-b\|^2 + \lambda(\|x\|_1-\|x\|_{(K)}),
\end{equation}
which has numerous applications in signal processing and compressed sensing.  The relationship between the $K$-sparse regularized problem \eqref{eq:dc_ksparse} and its corresponding cardinality constrained problem has been extensively investigated in \cite{ahn2017dclearning}.

Note that $||x||_{(K)}$ can be expressed as the pointwise maximum of finitely many linear functions as follows:
\begin{equation*}
    ||x||_{(K)}=\max\left\{ \langle \nu, x\rangle=\sum_{i=1}^n\nu_ix_i \mid \nu \in\{-1, 0, 1\}^n, \; |\supp(\nu)| = K  \right\}.
\end{equation*}
Therefore, the problem~\eqref{eq:dc_ksparse} is in the form of the optimization problem~\eqref{eq:dc_general}.
\end{example}

Regularization is one of the key techniques for feature selection in high dimensional statistical learning. Most of the popular nonconvex regularizers are DC functions, including the smoothly clipped absolute deviation (SCAD) penalty \cite{fan2001variable}, the minimax concave penalty \cite{zhang2010nearly}, and the capped $\ell_1$ penalty \cite{zhang2010cappedl1}. We describe the DC formulation of the capped $\ell_1$ penalty below. The DC formulation of more popular nonconvex regularizers can be found in the recent monograph \cite{cui2021modern}.

\begin{example}[Capped $\ell_1$ penalty~\cite{zhang2010cappedl1}]
\label{ex:capped_l1}
Consider the problem
\begin{align*}
\min_{x \in \mathbb{R}^n}\quad f(Ax) + \lambda \sum_{i=1}^n \min\left\{|x_i|, \theta\right\},
\end{align*}
where $A\in\mathbb{R}^{m\times n}$, $f: \mathbb{R}^m \to \mathbb{R}$ is a closed convex loss function, and $\lambda > 0$ and $\theta > 0$ are given parameters. The capped $\ell_1$ penalty term can be equivalently expressed as a DC function:
\begin{align*}
\sum_{i=1}^n \min\left\{|x_i|, \theta\right\} = \|x\|_1 - \sum_{i=1}^n \max\left\{|x_i| - \theta, 0\right\}.
\end{align*}
Thus, the problem can be rewritten in the form of \eqref{eq:dc_general} with:
\begin{align*}
\phi(x) = f(Ax) + \lambda \|x\|_1, \quad \psi(x) = \lambda \sum_{i=1}^n \max\left\{x_i - \theta,\, -x_i-\theta,\, 0\right\}.
\end{align*}
Furthermore, the function $\psi(x)$ admits a pointwise maximum representation:
\begin{align*}
    \psi(x)=\max\left\{ \lambda\sum_{i\in\supp(s)}(s_ix_i-\theta)\mid s \in \{-1,0,1\}^n \right\}.
\end{align*}
\end{example}

\begin{example}[DC reformulations of $K$-means and $K$-medians clustering models]
\label{ex:ex-clustering}
Given a dataset $\{a_1, \dots, a_n\} \subseteq \mathbb{R}^d$ and the number of clusters $1 < K < n$, we consider the following nonconvex nonsmooth clustering problem:
\begin{equation}
\label{eq:k-cluster}
\min_{\mu = (\mu_1, \dots, \mu_K) \in \mathbb{R}^{d \times K}} ~ \frac{1}{n}\sum_{i=1}^n \min_{1 \leq j \leq K} \|\mu_j - a_i\|_p^p,
\end{equation}
where $p = 1, 2$. The clustering model \eqref{eq:k-cluster} is known as the $K$-medians model and the $K$-means model for $p = 1$ and $p = 2$, respectively. By realizing that 
\begin{align*}
\min_{1 \leq j \leq K} \|\mu_j - a_i\|_p^p = \sum_{l=1}^K \|\mu_l - a_i\|_p^p - \max_{1 \leq j \leq K} \sum_{1 \leq l \neq j \leq K}\|\mu_l - a_i\|_p^p, \quad \forall 1 \leq i \leq n,
\end{align*}
we can equivalently reformulate the clustering model \eqref{eq:k-cluster} as the following DC programming:
\begin{equation}
\label{eq:dc_clustering}
\min_{\mu \in \mathbb{R}^{d \times K}}~\underbrace{\frac{1}{n}\sum_{i=1}^n \sum_{l=1}^K \|\mu_l - a_i\|_p^p}_{\phi(\mu)} - \underbrace{\frac{1}{n}\sum_{i = 1}^n \max_{1 \leq j \leq K} \left(\sum_{1 \leq l \neq j \leq K} \|\mu_l - a_i\|_p^p\right)}_{\psi(\mu)}.
\end{equation}
Moreover, let $\mathcal{I}=[K]^n$ denote the $n$-fold Cartesian product of $[K]$, i.e., $\mathcal{I}:=\{(j_1,\ldots,j_n)\mid j_i\in \{1, 2, \dots, K\},\,i=1,\ldots,n\}$, and define 
\begin{align*}
\psi_j(\mu) := \frac{1}{n}\sum_{i=1}^n\left(\sum_{1\leq l \neq j_i\leq K}\|\mu_l-a_i\|_p^p\right) \quad \forall j = (j_1, \dots, j_n) \in \mathcal{I},
\end{align*}
we know
\begin{align*}
\psi(\mu) = \max_{j \in \mathcal{I}}~\{\psi_j(\mu)\}.
\end{align*}
Therefore, both $K$-means and $K$-medians clustering problems can be written in the form of the optimization problem \eqref{eq:dc_general}. It has been demonstrated that the DC reformulations of $K$-means and $K$-medians models can achieve better performance \cite{an2007new}.
\end{example}

Next, we move on to introduce some definitions and assumptions that are necessary for the rest of this paper. Recall that the problem~\eqref{eq:dc_general} has the following form:
\begin{equation*}
    \min_{x\in\mb R^n}\quad \zeta(x)=\phi(x)-\psi(x),\quad \psi(x)=\max_{i\in \mc I}~\{\psi_i(x)\}.
\end{equation*}
We define  the \textit{active index set} of $\psi$ at $x$ as
\begin{align}\label{eq:active_set_general}
    \mc M(x) := \argmax_{i\in \mc I}\{\psi_i(x)\}=\left\{ i\in \mc I \mid \psi_i(x)=\psi(x) \right\}.
\end{align}
Since ${\rm dom}(\psi_i) = \mathbb{R}^n$ for all $i \in \mathcal{I}$ and $\phi(\cdot)$ is a proper closed convex function, we know that $\zeta(\cdot)$ is directionally differentiable on ${\rm dom}(\zeta)$ \cite[Sec.~23]{rockafellar1970convex}.  The directional derivative of $\zeta(\cdot)$ at $x \in {\rm dom}(\zeta)$ along a direction $d \in \mathbb{R}^n$ is defined as
\begin{align*}
\zeta'(x; d) := \lim_{\tau\downarrow0}\frac{\zeta(x +\tau d)-\zeta(x)}{\tau}.
\end{align*}

The optimization problem \eqref{eq:dc_general} is nonconvex, and it is not possible to compute a global solution in general. In practice, we will focus on computing a good stationary solution to \eqref{eq:dc_general} instead. The concepts of critical points and d-stationary points are defined below. 

\begin{definition}[Critical point~\cite{tao1997convex}]
    We call $x^*\in {\rm dom}(\zeta)$ a critical point of the optimization problem \eqref{eq:dc_general} if 
    $$\partial \phi(x^*) \bigcap \partial\psi(x^*) \neq \emptyset.$$
\end{definition}

\begin{definition}[d-stationary point~\cite{pang2017computing}]
    We call $x^*\in {\rm dom}(\zeta)$ a \textit{d-stationary} point of the optimization problem \eqref{eq:dc_general} if the directional derivative 
    $$ \zeta'(x^*;x-x^*) \geq 0, \quad \forall x\in\mb R^n.$$
\end{definition}
It is known that d-stationary points are stronger than the critical points in general \cite{pang2017computing}. A detailed discussion of the relationships among different stationary points for the DC programs can be found in \cite{pang2017computing}. The following characterization of the d-stationary points of \eqref{eq:dc_general} is useful. 

\begin{proposition}[{\cite[Prop.~5]{pang2017computing}}]\label{pro:d-stationary}
    A vector $x^*\in\dom(\zeta)$ is a d-stationary point of the problem~\eqref{eq:dc_general} if and only if: $\forall i\in\mc M(x^*)$,
    $$ x^*=\argmin_{x\in\mb R^n}\left\{ \phi(x)-\langle \nabla\psi_i(x^*),x-x^* \rangle+\frac{\sigma}{2}\|x-x^*\|^2 \right\}, $$ where $\sigma>0$. Moreover, if the function $\psi$ is piecewise affine, then any such d-stationary point is a local minimizer of $\zeta$.
\end{proposition}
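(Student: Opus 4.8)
The plan is to turn d-stationarity of $x^*$ into a finite list of convex optimality conditions, one for each index in the active set $\mathcal{M}(x^*)$ from \eqref{eq:active_set_general}, using only the calculus of directional derivatives of convex functions. \emph{Step 1 (computing $\zeta'(x^*;\cdot)$).} First I would note that, since every $\psi_i$ is convex and continuously differentiable and $\mathcal{I}$ is finite, the max-function (Danskin) rule gives $\psi'(x^*;d)=\max_{i\in\mathcal{M}(x^*)}\langle\nabla\psi_i(x^*),d\rangle$ for all $d\in\mathbb{R}^n$. Because $\phi$ is proper closed convex and $x^*\in\dom(\zeta)=\dom(\phi)$, the quantity $\phi'(x^*;d)$ is well defined in $(-\infty,+\infty]$, and the DC structure gives $\zeta'(x^*;d)=\phi'(x^*;d)-\psi'(x^*;d)$ (no $\infty-\infty$ can occur since $\psi'(x^*;d)$ is finite). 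Hence $x^*$ is d-stationary if and only if $\phi'(x^*;d)\ge\max_{i\in\mathcal{M}(x^*)}\langle\nabla\psi_i(x^*),d\rangle$ for every $d$.

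\emph{Step 2 (decoupling the active indices).} Next I would argue that the displayed condition is equivalent to: for \emph{each} $i\in\mathcal{M}(x^*)$, $\phi'(x^*;d)\ge\langle\nabla\psi_i(x^*),d\rangle$ for all $d$. The implication ``$\Rightarrow$'' is trivial since the max dominates each term; for ``$\Leftarrow$'', fix $d$, pick $i_0\in\mathcal{M}(x^*)$ attaining the (finite) maximum, and apply the inequality at $i_0$. Now, for a fixed $i\in\mathcal{M}(x^*)$, introduce the proper closed convex function $h_i(x):=\phi(x)-\langle\nabla\psi_i(x^*),x-x^*\rangle$, so that $h_i'(x^*;d)=\phi'(x^*;d)-\langle\nabla\psi_i(x^*),d\rangle$. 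The convex function $h_i$ has nonnegative directional derivative at $x^*$ in every direction if and only if $0\in\partial h_i(x^*)$, i.e.\ if and only if $x^*\in\argmin_{x} h_i(x)$; and since $\frac{\sigma}{2}\|x-x^*\|^2\ge0$ with equality only at $x^*$ and $\sigma>0$, this holds if and only if $x^*$ is the unique minimizer of $h_i(x)+\frac{\sigma}{2}\|x-x^*\|^2$, which is exactly the subproblem in the statement. (If $\partial\phi(x^*)=\emptyset$, both characterizations simply fail, so this boundary case causes no trouble.) Running this over all $i\in\mathcal{M}(x^*)$ yields the asserted equivalence.

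\emph{Step 3 (piecewise-affine case).} Suppose $\psi$ is piecewise affine; we may then take each $\psi_i$ affine, so that $\psi_i(x)=\psi(x^*)+\langle\nabla\psi_i(x^*),x-x^*\rangle$ for all $x$ whenever $i\in\mathcal{M}(x^*)$. By continuity of the finitely many $\psi_i$ and the strict inactivity $\psi_j(x^*)<\psi(x^*)$ for $j\notin\mathcal{M}(x^*)$, there is a neighborhood $U$ of $x^*$ on which $\psi(x)=\max_{i\in\mathcal{M}(x^*)}\psi_i(x)$. Thus for $x\in U$,
$$\zeta(x)=\phi(x)-\max_{i\in\mathcal{M}(x^*)}\bigl(\psi(x^*)+\langle\nabla\psi_i(x^*),x-x^*\rangle\bigr)=\min_{i\in\mathcal{M}(x^*)}\bigl(h_i(x)-\psi(x^*)\bigr).$$
By Step 2, $h_i(x)\ge h_i(x^*)=\phi(x^*)$ for each $i\in\mathcal{M}(x^*)$ and all $x$, so every term above is at least $\phi(x^*)-\psi(x^*)=\zeta(x^*)$; taking the minimum over $i$ gives $\zeta(x)\ge\zeta(x^*)$ for all $x\in U$, i.e.\ $x^*$ is a local minimizer of $\zeta$.

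\emph{Main obstacle.} The only genuinely delicate points are the directional-derivative bookkeeping of Step 1 --- the max-rule for $\psi'(x^*;\cdot)$ and the additivity $\zeta'(x^*;\cdot)=\phi'(x^*;\cdot)-\psi'(x^*;\cdot)$ when $\phi$ is possibly extended-valued --- together with the ``max-to-each'' reduction opening Step 2; once these are pinned down, the remainder is routine convex analysis, and in Step 3 one should double-check the passage to affine pieces against the stated hypotheses.
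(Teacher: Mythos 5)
The paper offers no proof of this proposition---it is imported verbatim from \cite[Prop.~5]{pang2017computing}---and your argument is a correct, self-contained reconstruction of the standard proof given there: the max-rule $\psi'(x^*;d)=\max_{i\in\mathcal M(x^*)}\langle\nabla\psi_i(x^*),d\rangle$, the per-index decoupling into the convex optimality conditions $0\in\partial h_i(x^*)$ (equivalently, $x^*$ being the unique minimizer of the strongly convex subproblem), and the local representation of $\zeta$ as a minimum of the functions $h_i-\psi(x^*)$ in the piecewise-affine case. The only points worth tightening are that $\phi'(x^*;d)$ can a priori equal $-\infty$ for a proper closed convex $\phi$ (harmless, since then $\partial\phi(x^*)=\emptyset$ and both sides of the equivalence fail, as your parenthetical anticipates), and that in Step 3 you should say explicitly why replacing the given $\psi_i$ by affine pieces is legitimate---namely, d-stationarity is a property of $\zeta$ alone and does not depend on the chosen DC decomposition, so one may re-run Step 2 with the affine representation.
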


The following two assumptions regarding the objective functions in \eqref{eq:dc_general} are assumed for the rest of this paper.

\begin{assumption}[Bounded level sets]\label{ass:coercive}
    The level sets of $\zeta$ are bounded, i.e., for any $c \in \mathbb{R}$, the set $\{x \mid \zeta(x) \leq c\}$ is bounded.
\end{assumption}

\begin{assumption}[$L$-smoothness of $\psi_i$]\label{ass:psi}
    There exists a constant $L>0$ such that $\psi_i$ is $L$-smooth for all $i\in\mc I$, i.e., 
    \begin{align*}
    \|\nabla \psi_i(x) - \nabla \psi_i(y)\| \leq L \|x - y\| \quad \forall x, y \in \mathbb{R}^n.
    \end{align*}
\end{assumption}

\section{A Perturbed DCA for Computing d-Stationary Points of the DC Program~\eqref{eq:dc_general}}\label{sec:pDCA}

In this section, we will propose an efficient perturbed difference-of-convex algorithm for solving problem~\eqref{eq:dc_general}, which (subsequentially) converges to d-stationary points of the DC program \eqref{eq:dc_general} almost surely.

Since each $\psi_i$ is continuously differentiable, we define the \textit{active gradient set} of $\psi$ at $x$ as
\begin{align*}%\label{eq:active_gra_set}
    \mc S(x):=\left\{ \nabla\psi_i(x)\mid i\in\mc M(x) \right\},
\end{align*}
where $\mc M(x)$ is the active index set of $\psi$ at $x$ defined in \eqref{eq:active_set_general}. We have the following result, which is a consequence of Rademacher's theorem \cite{Rademacher1919,rw1998}.

\begin{proposition}
\label{prop:singleton_almost_surely}
Given {$\alpha>0$ and }$x \in \mathbb{R}^n$, let $\xi\in\mb R^n$ be a random vector uniformly distributed on the ball $\{\xi \in \mathbb{R}^n \mid \|\xi\| \leq\alpha\}$% with radius $\alpha>0$
, and $\hat x=x + \xi$. Then, $\mathcal{S}(\hat{x})$ is a singleton almost surely. Furthermore, if each $\psi_i$ is affine, $\mc M(\hat x)$ is also a singleton almost surely.
\end{proposition}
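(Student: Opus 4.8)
The plan is to reduce the statement to a Lebesgue-measure fact about the deterministic functions $\psi_i$, and then transfer it to the random point $\hat x$ using the absolute continuity of its law.

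First I would establish the pointwise claim: \emph{if $\psi$ is differentiable at a point $y$, then $\mathcal{S}(y)$ is a singleton.} To see this, fix $i \in \mathcal{M}(y)$ and an arbitrary direction $d \in \mathbb{R}^n$. Since $\psi \ge \psi_i$ everywhere and $\psi(y) = \psi_i(y)$, for $\tau > 0$ we have $\tfrac{\psi(y + \tau d) - \psi(y)}{\tau} \ge \tfrac{\psi_i(y + \tau d) - \psi_i(y)}{\tau}$; letting $\tau \downarrow 0$ and using differentiability of $\psi$ at $y$ and of $\psi_i$ everywhere gives $\langle \nabla\psi(y) - \nabla\psi_i(y), d\rangle \ge 0$ for every $d$, hence $\nabla\psi_i(y) = \nabla\psi(y)$. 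As $i \in \mathcal{M}(y)$ was arbitrary, $\mathcal{S}(y) = \{\nabla\psi(y)\}$. (Equivalently, one may invoke the subdifferential formula $\partial\psi(y) = \conv(\mathcal{S}(y))$ for a finite maximum of convex $C^1$ functions and note that a finite set whose convex hull is a point is itself a point.) Consequently the bad set $\mathcal{B} := \{y \in \mathbb{R}^n \mid \mathcal{S}(y) \text{ is not a singleton}\}$ is contained in the set of points where $\psi$ fails to be differentiable.

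Next, since $\psi = \max_{i \in \mathcal{I}} \psi_i$ is a finite maximum of convex functions that are finite on all of $\mathbb{R}^n$, it is convex and locally Lipschitz, hence differentiable Lebesgue-almost everywhere by Rademacher's theorem \cite{Rademacher1919,rw1998}; thus $\mathcal{B}$ is Lebesgue-null. The random point $\hat x = x + \xi$ is uniformly distributed on the ball $\{y \mid \|y - x\| \le \alpha\}$, so its law is absolutely continuous with respect to the Lebesgue measure on $\mathbb{R}^n$; therefore $\mathbb{P}(\hat x \in \mathcal{B}) = 0$, i.e. $\mathcal{S}(\hat x)$ is a singleton almost surely. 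For the affine case, write $\psi_i(y) = \langle a_i, y\rangle + b_i$ with the pairs $(a_i, b_i)$ pairwise distinct (the $\psi_i$ are assumed distinct). If $|\mathcal{M}(y)| \ge 2$, there exist $i \ne j$ with $\psi_i(y) = \psi_j(y)$; if $a_i = a_j$ then distinctness forces $b_i \ne b_j$ and $\psi_i - \psi_j$ is a nonzero constant, which is impossible, so necessarily $a_i \ne a_j$ and then $y$ lies on the hyperplane $\{y \mid \langle a_i - a_j, y\rangle = b_j - b_i\}$. Hence $\{y \mid |\mathcal{M}(y)| \ge 2\}$ is contained in a finite union of hyperplanes and is Lebesgue-null, and the same absolute-continuity argument yields $\mathbb{P}(|\mathcal{M}(\hat x)| \ge 2) = 0$, so $\mathcal{M}(\hat x)$ is a singleton almost surely.

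The first-order expansions and the measure-theoretic bookkeeping are routine; the only step needing a little care is the first one — correctly linking differentiability of the max function $\psi$ to the collapse of $\mathcal{S}(y)$ to a single gradient — together with the observation that null-set conclusions pass to $\hat x$ precisely because the uniform law on the ball is absolutely continuous. I do not anticipate a genuine obstacle beyond this.
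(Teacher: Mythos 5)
Your proof is correct and follows essentially the same route as the paper: reduce the claim to differentiability of the pointwise maximum $\psi$, invoke Rademacher's theorem, and transfer the resulting null-set statement to $\hat x$ via the absolute continuity of the uniform law on the ball (a step the paper leaves implicit but you rightly make explicit). The only minor divergence is the affine case, where you show that the set where $\mathcal{M}(\cdot)$ is not a singleton lies in a finite union of hyperplanes, whereas the paper deduces the same conclusion directly from the already-established singleton property of $\mathcal{S}(\hat x)$; both arguments are valid.
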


\begin{proof}
Define
$$ \mc D := \left\{ x\in\mb R^n \mid \psi\text{ is differentiable at }x \right\}. $$
It follows from \cite[Thm. 25.1]{rockafellar1970convex} and \cite[Ex. 8.31]{rw1998} that, for any $y\in\mc D$,
\begin{align*}
\partial \psi(y) = \conv\left\{\nabla \psi_i(y) \mid i \in \mathcal{M}(y)\right\} = \{\nabla\psi(y)\}
\end{align*}
is a singleton. Then, we have \(\mc S(y)\) must be a singleton for $y \in \mathcal{D}$. Since each $\psi_i$ is convex and continuously differentiable, $\psi$ is a finite convex function. By Rademacher's theorem \cite{Rademacher1919,rw1998}, we have that
\begin{align*}
\mathbb{P}(\hat{x}\in\mc D) = 1.
\end{align*}
Therefore, \(\mathcal{S}(\hat{x})\) is a singleton almost surely.

Next, we prove the second claim. Since $\psi_i$ are affine functions, for all $i\in \mc I$, \(\psi_i(\cdot) = \langle a_i, \cdot\rangle + b_i\) for some $a_i\in\mb R^n$ and $b_i\in\mb R$ with \(\nabla \psi_i(x) = a_i\) for all $x\in\mb R^n$.   If \(\mathcal{S}(\hat{x})\) is a singleton, we have that $a_i = a_j$ for any \(i,j \in \mathcal{M}(\hat{x})\). Moreover, since \(\psi_i(\hat{x}) = \psi_j(\hat{x})\) for all \(i,j \in \mathcal{M}(\hat{x})\), we have $b_i = b_j$. Since $\psi_i$ are distinct, we know that \(\mathcal{M}(\hat{x})\) is a singleton almost surely.
\end{proof}

Based on Proposition \ref{prop:singleton_almost_surely}, we propose a perturbed difference-of-convex algorithm for solving problem~\eqref{eq:dc_general} in Algorithm~\ref{alg:pDCA}.
\begin{algorithm}[htbp]
\caption{Perturbed DCA for solving \eqref{eq:dc_general}}
\label{alg:pDCA}
\begin{algorithmic}[1]
\setlength{\abovedisplayskip}{3pt}
\setlength{\belowdisplayskip}{3pt}
\REQUIRE $x^0\in\dom(\zeta)$, $\sigma>0$.
% and an non-negative sequence $\{\alpha_k\}_{k\geq0}\subseteq\mb R$.
\FOR{$k=0,1,\cdots$}
    \STATE \textbf{Step 1}. Randomly sample $\xi^k\sim\mr{Unif}(\mb S^{n-1})$, select $\alpha_k>0$, and construct
    $$e^k = \alpha_k\xi^k, \quad \hat x^k = x^k + e^k.$$ 
    \STATE {\textbf{Step 2}. Randomly select $i_k\in\mc M(\hat{x}^k)$ and set $g^k=\nabla\psi_{i_k}(\hat{x}^k)$.}
    % \IF {$\mc S(\hat x^k)$ is a singleton}
    % \STATE Denote
    % \begin{align*}
    % \mc S(\hat x^k)=\{g^k\}.
    % \end{align*}
    % \STATE Denote \(
    %  \mc S(\hat x^k)=\{g^k\}.
    %  \)
    % \STATE {Let $g^k$ be the unique element of $\mc S(\hat x^k)$, i.e., $\mc S(\hat x^k)=\{g^k\}$.}
    % \ELSE
    % \STATE Return to \textbf{Step 1}.
    % \ENDIF
    
    \STATE \textbf{Step 3}. Update 
    \begin{equation}
    \label{eq: update_x_pdca}
        x^{k+1}=\argmin\limits_{x\in\mathbb R^n}\left\{\phi(x) - \psi(\hat{x}^k) - \langle g^k,x-\hat x^k \rangle + \dfrac{\sigma}{2}\|x-\hat x^k\|^2 \right\}.
    \end{equation}
\ENDFOR
% \RETURN $x^{k+1}$.
\end{algorithmic}
\end{algorithm}

\begin{remark}
    In contrast to Algorithm~\ref{alg:Pang}, which at each iteration $k$ requires solving multiple subproblems for all indices in the $\epsilon$-active index set $\mathcal{M}_\epsilon(x^k)$ and selecting the best candidate in terms of the ``proximal objective value'', the proposed perturbed DCA in Algorithm~\ref{alg:pDCA} only solves a single subproblem at each iteration. This substantially reduces the per-iteration computational cost, making the perturbed DCA scalable for solving large-scale nonsmooth DC programs.     {We also emphasize that the random perturbation in perturbed DCA plays a different role from randomly drawing an index from an $\mathcal{M}_\epsilon(x^k)$, e.g., in \cite[Sec.~5.2]{pang2017computing} and \cite[Alg.~2]{van2019non}. Random index selection requires that the relevant relaxed active set be constructed or maintained. It is expensive when $\mathcal{M}_\epsilon(x^k)$ (or $\mathcal{M}(x^k)$) is large. By contrast, perturbed DCA randomly samples a nearby point at which the active-gradient set is a singleton without constructing or maintaining any relaxed active sets.}
\end{remark}

{
\begin{remark}
    Although Algorithm~\ref{alg:pDCA} randomly selects an index \(i_k\in\mathcal M(\hat{x}^k)\) in \textbf{Step 2}, Proposition~\ref{prop:singleton_almost_surely} implies that \(\mathcal S(\hat{x}^k)\) is a singleton almost surely. Hence, all active indices at \(\hat{x}^k\) yield the same gradient almost surely, and \(g^k\) is the unique active gradient. In particular, no resampling of the perturbation is needed.
\end{remark}}

We make the following assumption regarding the perturbations in Algorithm~\ref{alg:pDCA}.

\begin{assumption}\label{ass:perturbation}
The sequence $\{\alpha_k\}_{k \ge 0}$ in Algorithm~\ref{alg:pDCA} satisfies  $\sum_{k=0}^{\infty} \alpha_k^2 < \infty$.
\end{assumption}

Let $\mathcal{F}_k := \sigma\bigl(x^0, \xi^0, x^1, \xi^1, \dots, \xi^{k-1}, x^k\bigr)$ be the $\sigma$-algebra generated by all the randomness up to (and including) $x^k$, and denote $\mb E_k[\cdot]:=\mb E[\cdot\mid\mc F_k]$.  The Lemma~\ref{lem:boundedness} below shows the almost sure boundedness of the sequence generated by Algorithm~\ref{alg:pDCA}.

\begin{lemma}[Sequence boundedness]\label{lem:boundedness}
    Suppose that Assumptions~\ref{ass:coercive}--\ref{ass:perturbation} hold. Let $\{\hat x^k\}$ and $\{x^k\}$ be generated by Algorithm~\ref{alg:pDCA}. Then the following statements hold almost surely:
    \begin{enumerate}
        \item[(i)] The sequence $\{x^k\}$ is bounded;
        \item[(ii)] $\sum_k\mb E_k[\|x^{k+1}-\hat x^k\|^2]<\infty$.
    \end{enumerate}
\end{lemma}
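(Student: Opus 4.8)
The plan is to derive a one-step descent-type inequality for the objective $\zeta$ along the sequence $\{x^k\}$, with an additive error term controlled by $\alpha_k$, and then invoke a supermartingale-type argument together with Assumption~\ref{ass:perturbation} ($\sum_k\alpha_k^2<\infty$). First I would exploit the optimality in the subproblem \eqref{eq: update_x_pdca}: since $x^{k+1}$ minimizes a $\sigma$-strongly convex function, comparing its value at $x^{k+1}$ with its value at $\hat x^k$ (or at $x^k$) gives
\[
\phi(x^{k+1}) - \langle g^k, x^{k+1}-\hat x^k\rangle + \frac{\sigma}{2}\|x^{k+1}-\hat x^k\|^2 \;\le\; \phi(\hat x^k) - \frac{\sigma}{2}\|x^{k+1}-\hat x^k\|^2,
\]
using strong convexity to keep the extra $\frac{\sigma}{2}\|x^{k+1}-\hat x^k\|^2$ on the left. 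Since $g^k=\nabla\psi_{i}(\hat x^k)$ for some active index $i\in\mathcal{M}(\hat x^k)$ and $\psi$ is convex, the linearization underestimates: $\psi(x^{k+1}) \ge \psi(\hat x^k) + \langle g^k, x^{k+1}-\hat x^k\rangle$. Combining these two gives $\zeta(x^{k+1}) = \phi(x^{k+1})-\psi(x^{k+1}) \le \phi(\hat x^k) - \psi(\hat x^k) - \sigma\|x^{k+1}-\hat x^k\|^2 = \zeta(\hat x^k) - \sigma\|x^{k+1}-\hat x^k\|^2$.

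Next I would relate $\zeta(\hat x^k)$ back to $\zeta(x^k)$, paying for the perturbation $\hat x^k = x^k + e^k$ with $\|e^k\|=\alpha_k$. Here I would use Assumption~\ref{ass:psi}: $\psi$ is a max of $L$-smooth convex functions, hence itself has an $L$-smooth-type upper bound on differences, and $\phi$ is convex but possibly nonsmooth, so $\phi(\hat x^k)$ cannot be bounded above by $\phi(x^k)$ plus a controlled term in general. This is the main obstacle: $\phi$ need not be Lipschitz. To get around it, rather than bounding $\zeta(\hat x^k)$ in terms of $\zeta(x^k)$, I would instead bound $\zeta(x^{k+1})$ directly by comparing the subproblem value at $x^{k+1}$ against its value at $x^k$ itself (not $\hat x^k$): plug $x=x^k$ into the minimand of \eqref{eq: update_x_pdca}, obtaining
\[
\phi(x^{k+1}) - \langle g^k, x^{k+1}-\hat x^k\rangle + \frac{\sigma}{2}\|x^{k+1}-\hat x^k\|^2 \le \phi(x^k) - \langle g^k, x^k-\hat x^k\rangle + \frac{\sigma}{2}\|x^k-\hat x^k\|^2,
\]
so $\phi(x^{k+1}) \le \phi(x^k) + \langle g^k, x^{k+1}-x^k\rangle + \frac{\sigma}{2}\alpha_k^2 - \frac{\sigma}{2}\|x^{k+1}-\hat x^k\|^2$. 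Then $\zeta(x^{k+1}) = \phi(x^{k+1})-\psi(x^{k+1}) \le \phi(x^k) - \psi(x^{k+1}) + \langle g^k, x^{k+1}-x^k\rangle + \frac{\sigma}{2}\alpha_k^2 - \frac{\sigma}{2}\|x^{k+1}-\hat x^k\|^2$. For the concave part I would write $\psi(x^{k+1}) \ge \psi(\hat x^k) + \langle g^k, x^{k+1}-\hat x^k\rangle \ge \psi(x^k) + \langle g^k, x^k - \hat x^k\rangle + \langle g^k, x^{k+1}-\hat x^k\rangle - \frac{L}{2}\alpha_k^2$, where the last step uses that $g^k$ is the gradient of an active $L$-smooth piece at $\hat x^k$ combined with $\psi_i(x^k)\le\psi(x^k)$ and $L$-smoothness of $\psi_i$ to quadratically bound $\psi_i(\hat x^k)$ below by $\psi_i(x^k)+\langle\nabla\psi_i(x^k)\cdot\rangle$ — actually, more carefully, $\psi(x^k)\ge\psi_i(x^k)\ge\psi_i(\hat x^k)-\langle g^k, \hat x^k-x^k\rangle - \frac{L}{2}\alpha_k^2 = \psi(\hat x^k) + \langle g^k, x^k-\hat x^k\rangle - \frac{L}{2}\alpha_k^2$. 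Substituting and telescoping the inner products, the $\langle g^k,\cdot\rangle$ terms cancel and one is left with
\[
\zeta(x^{k+1}) \;\le\; \zeta(x^k) \;-\; \frac{\sigma}{2}\|x^{k+1}-\hat x^k\|^2 \;+\; \frac{\sigma+L}{2}\,\alpha_k^2 .
\]

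Finally I would finish by a deterministic summation: since $\zeta$ is bounded below on its bounded level sets (Assumption~\ref{ass:coercive} guarantees $\inf\zeta>-\infty$ once we know the iterates stay in a level set — which follows inductively from the displayed inequality and $\sum\alpha_k^2<\infty$), summing over $k=0,\dots,N$ yields $\frac{\sigma}{2}\sum_{k=0}^N\|x^{k+1}-\hat x^k\|^2 \le \zeta(x^0) - \inf\zeta + \frac{\sigma+L}{2}\sum_{k=0}^\infty\alpha_k^2 < \infty$. Hence $\zeta(x^{k+1})$ stays below $\zeta(x^0)+\frac{\sigma+L}{2}\sum\alpha_k^2$, so all $x^k$ lie in a fixed bounded level set of $\zeta$, giving (i); and $\sum_k\|x^{k+1}-\hat x^k\|^2<\infty$ surely, which in particular implies $\sum_k\mathbb E_k[\|x^{k+1}-\hat x^k\|^2]<\infty$ a.s. after taking conditional expectations and using the tower property (or simply noting the bound is pathwise). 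One subtlety to handle: in Algorithm~\ref{alg:pDCA}, Step~1 may reject and resample when $\mathcal S(\hat x^k)$ is not a singleton, but by Proposition~\ref{prop:singleton_almost_surely} this happens with probability one only finitely often (indeed, acceptance occurs on the first try a.s.), so the analysis above applies to the accepted perturbations without change. The main obstacle, as noted, is the nonsmoothness of $\phi$, which is circumvented by never comparing $\phi(\hat x^k)$ to $\phi(x^k)$ directly but instead routing the comparison through the subproblem optimality at the un-perturbed point $x^k$.
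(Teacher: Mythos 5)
Your overall strategy (compare the subproblem minimand at $x^{k+1}$ against its value at $x^k$, use convexity of the active piece for the concave part, telescope, then sum) matches the paper's up to the point where randomness must enter, but the deterministic one-step inequality you arrive at is false, and this is a genuine gap rather than a presentational one. The problematic step is the lower bound $\psi(\hat x^k)\ \ge\ \psi(x^k)+\langle g^k,\hat x^k-x^k\rangle-\tfrac{L}{2}\alpha_k^2$ (or the sign-flipped variant you wrote; either reading fails). What convexity and $L$-smoothness of $\psi_i$ actually give is $\psi(\hat x^k)=\psi_i(\hat x^k)\ge \psi_i(x^k)+\langle g^k,e^k\rangle-L\alpha_k^2$ with $i\in\mathcal M(\hat x^k)$, and here $\psi_i(x^k)\le\psi(x^k)$ points the wrong way: the index active at the perturbed point need not be active at $x^k$, and the gap $\psi(x^k)-\psi_i(x^k)$ is $\Theta(\alpha_k)$, not $O(\alpha_k^2)$, whenever $e^k$ crosses a kink of $\psi$. (Your own ``more careful'' justification proves the reverse inequality, an upper bound on $\psi(\hat x^k)$.) A one-dimensional check: $\psi(x)=|x|$, $x^k=\alpha_k/2$, $e^k=-\alpha_k$, so $\hat x^k=-\alpha_k/2$, $g^k=-1$; your claimed bound reads $\alpha_k/2\ge 3\alpha_k/2$. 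Consequently the correct pathwise recursion carries a residual of order $\alpha_k$, and $\sum_k\alpha_k$ may diverge under Assumption~\ref{ass:perturbation} (e.g.\ $\alpha_k=1/k$), so the deterministic telescoping does not close. This also explains why conclusion (ii) is stated with conditional expectations and ``almost surely'' rather than as the pathwise summability you claim at the end.

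The missing idea is that the first-order residuals must be killed in conditional expectation using $\mathbb E_k[e^k]=0$, not pathwise. The paper isolates exactly the two offending terms, $\psi(x^k)-\psi(\hat x^k)$ and $\langle\nabla\psi_i(\hat x^k),e^k\rangle$, and shows $\mathbb E_k[\psi(x^k)-\psi(\hat x^k)]\le 0$ by Jensen's inequality (convexity of $\psi$ plus zero-mean perturbation) and $\mathbb E_k[\langle\nabla\psi_i(\hat x^k),e^k\rangle]\le L\,\mathbb E_k[\|e^k\|^2]=L\alpha_k^2$ by splitting off $\nabla\psi_i(x^k)$ (whose inner product with $e^k$ has zero conditional mean) and using $L$-smoothness on the remainder. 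This yields
\[
\mathbb E_k[\zeta(x^{k+1})]\ \le\ \zeta(x^k)+\Bigl(L+\tfrac{\sigma}{2}\Bigr)\alpha_k^2-\tfrac{\sigma}{2}\,\mathbb E_k\bigl[\|x^{k+1}-\hat x^k\|^2\bigr],
\]
an almost-supermartingale recursion closed by the Robbins--Siegmund lemma (not by deterministic summation), after which Assumption~\ref{ass:coercive} gives boundedness. Your instinct to route the $\phi$-comparison through $x^k$ rather than $\hat x^k$ (to avoid evaluating $\phi$ outside its domain) is correct and is what the paper does; the probabilistic treatment of the $\psi$-side residuals is what you are missing.
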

\begin{proof}
Fix any $i \in \mathcal{M}(\hat{x}^k)$. Then, we have 
\begin{align*}
\psi(\hat{x}^k) = \psi_i(\hat{x}^k), \quad g^k = \nabla \psi_i(\hat{x}^k),
\end{align*}
and
\begin{align*}
    \zeta(x^{k+1}){=}& \phi(x^{k+1}) - \psi(x^{k+1})\\
    \leq& \phi(x^{k+1}) - \psi_i(x^{k+1}) \quad(\psi=\max\psi_i) \\
    \leq& \phi(x^{k+1})-\psi(\hat x^k)-\langle g^k,x^{k+1}-\hat x^k\rangle \quad{(\psi_i\text{ convex})} \\
    \leq& \phi(x^k)-\psi(\hat x^k)-\langle g^k,x^k-\hat x^k\rangle+\frac\sigma2\|x^k-\hat x^k\|^2-\frac\sigma2\|x^{k+1}-\hat x^k\|^2 \quad(\text{by }\eqref{eq: update_x_pdca})\\
    =& \phi(x^k)-\psi_{i}(\hat x^k)-\langle\nabla\psi_{i}(\hat x^k), x^k-\hat x^k\rangle +\frac\sigma2\|x^k-\hat x^k\|^2-\frac\sigma2\|x^{k+1}-\hat x^k\|^2 \\
    =& \underbrace{\phi(x^k) - \psi(x^k)}_{=\zeta(x^k)} + \psi(x^k) - \psi(\hat x^k) + \underbrace{\psi(\hat x^k) - \psi_{i}(\hat x^k)}_{=0} + \langle\nabla\psi_{i}(\hat x^k), \hat x^k-x^k\rangle \\ 
    &+ \underbrace{\frac\sigma2\|x^k-\hat x^k\|^2}_{=\frac\sigma2\|e^k\|^2}-\frac\sigma2\|x^{k+1}-\hat x^k\|^2.
\end{align*}
Taking the conditional expectation $\mb E_k[\cdot]$ on both sides of the above inequality, noticing $x^k$ is $\mc F_k$-measurable, we have that
\begin{align}\label{pf:lem-1}
    \notag \mb E_k[\zeta(x^{k+1})] \leq& \zeta(x^k) + \mb E_k[\psi(x^k) - \psi(\hat x^k)] + \mb E_k[\langle\nabla\psi_{i}(\hat x^k), \hat x^k-x^k\rangle] \\ 
    &+ \frac\sigma2\mb E_k[\|e^k\|^2]-\frac\sigma2\mb E_k[\|x^{k+1}-\hat x^k\|^2].
\end{align}
For the second term on the right-hand-side (R.H.S.) of \eqref{pf:lem-1}, since $\psi$ is convex, by Jensen's inequality \cite[Thm.~1.6.2]{durrett2019probability} and the fact that $\mb E_k[e^k]=\alpha_k\mb E_k[\xi^k]=0$, it holds that
\begin{align}\label{pf:lem-2}
    \mb E_k[\psi(x^k) - \psi(\hat x^k)] = \mb E_k[\psi(x^k)] - \mb E_k[\psi(\hat x^k)] \leq \psi(x^k) - \psi(\mb E_k[x^k+e^k]) = 0.
\end{align}
For the third term on the R.H.S. of \eqref{pf:lem-1}, by the $L$-smoothness of $\psi_i$ and the fact that $\mb E_k[e^k]=0$, we have that
\begin{align}\label{pf:lem-3}
    \notag\mb E_k[\langle\nabla\psi_{i}(\hat x^k), \hat x^k-x^k\rangle] =& \mb E_k[\langle\nabla\psi_{i}(\hat x^k)-\nabla\psi_i(x^k), \hat x^k-x^k\rangle] + \mb E_k[\langle \nabla\psi_i(x^k), \hat x^k-x^k \rangle] \\
    \leq& L\mb E_k[\|e^k\|^2].
\end{align}
Substituting \eqref{pf:lem-2} and \eqref{pf:lem-3} into \eqref{pf:lem-1}, we obtain that
\begin{align*}%\label{eq:key_inequality}
    \notag\mb E_k[\zeta(x^{k+1})] \leq& \zeta(x^k) + \left(L+\frac\sigma2\right)\mb E_k[\|e^k\|^2] - \frac\sigma2\mb E_k[\|x^{k+1}-\hat x^k\|^2] \\
    =& \zeta(x^k) + \left(L+\frac\sigma2\right){\alpha_k^2} - \frac\sigma2\mb E_k[\|x^{k+1}-\hat x^k\|^2].
\end{align*}
Since {$\{\alpha_k^2\}$} is summable {according to Assumption~\ref{ass:perturbation}}, by the Robbins--Siegmund {theorem \cite[Thm.~1]{robbins1971rslemma}}, $\zeta(x^k)$ converges to a finite random variable and $\sum_k\mb E_k[\|x^{k+1}-\hat x^k\|^2]<\infty$ almost surely. {Therefore, there exists an event \(\Omega_0\) with probability one such that, for every \(\omega\in\Omega_0\), the sequence \(\{\zeta(x^k(\omega))\}\) is bounded above, i.e., \(C(\omega):=\sup_{k\geq0}\zeta(x^k(\omega))<+\infty\). By Assumption~\ref{ass:coercive}, the level set \(\{x\mid \zeta(x)\leq C(\omega)\}\) is bounded. Since \(x^k(\omega)\) belongs to this level set for all \(k\), the sequence \(\{x^k(\omega)\}\) is bounded. Hence, \(\{x^k\}\) is bounded almost surely.}
% Since $\zeta$ has bounded level sets by Assumption~\ref{ass:coercive}, the sequence $\{x^k\}$ is bounded almost surely.
\end{proof}

\begin{remark}
Due to the Robbins--Siegmund theorem and our selection of random perturbations $\{e^k\}$, we do not require $\{\zeta(x^k)\}_{k\geq0}$ to be monotone decreasing. Therefore, the choice of the parameter $\sigma > 0$ can be independent of the smoothness constant $L$ in Assumption~\ref{ass:psi}. This provides flexibility for us to choose $\sigma$. In Algorithm~\ref{alg:pDCA}, we can choose a sequence of $\{\sigma_k\}_{k \geq 0}$ satisfying $0 < \underline{\sigma} \leq \sigma_k \leq \bar{\sigma} < +\infty$ for some $\underline{\sigma}$ and $\bar{\sigma}$, $\forall k\geq0$. 
\end{remark}

Based on Lemma~\ref{lem:boundedness}, the sequence $\{x^k\}$ generated by Algorithm~\ref{alg:pDCA} possesses at least one cluster point $x^*$ almost surely. We introduce the following assumption regarding the geometry of the subdifferential of $\psi(\cdot)$ at $x^*$, which can guarantee that $x^*$ is a d-stationary point of \eqref{eq:dc_general} almost surely.

\begin{assumption}[Non-emptiness of the exposed direction sets]
\label{ass:linear_independence}
% Let $x^*$ be a cluster point of the sequence $\{x^k\}$ generated by Algorithm~\ref{alg:pDCA}.
{Let $x^*$ be any candidate cluster point of the sequence generated by the algorithm under consideration.} For any active index $i \in \mathcal{M}(x^*)$, the set of exposed directions, defined by
\begin{align*}
\mathcal{A}_i^* := \left\{ d \in \mathbb{S}^{n-1} \mid \langle \nabla \psi_i(x^*) - \nabla \psi_j(x^*), d \rangle > 0, \forall j \in \mathcal{M}(x^*) \setminus \{i\} \right\}
\end{align*}
is non-empty. {When \(\mathcal{M}(x^*)=\{i\}\), we have \(\mathcal{A}_i^*=\mathbb{S}^{n-1}\neq\emptyset\).}
\end{assumption}

\begin{remark}
\label{remark: sep_assump3}
It follows from the separation theorem \cite[Cor.~11.4.2]{rockafellar1970convex} that, for any $i \in \mathcal{M}(x^*)$, $\mathcal{A}^*_i$ is nonempty if $\nabla\psi_i(x^*)\notin\conv\left(\{\nabla\psi_j(x^*)\}_{j \in \mathcal{M}(x^*)\backslash \{i\}}\right)$.
% is not in the convex hull of $\{\nabla\psi_j(x^*)\}_{j \in \mathcal{M}(x^*)\backslash \{i\}}$.
{Thus, Assumption~\ref{ass:linear_independence} is implied by the condition that every active gradient is an exposed point of the convex hull of the active gradients. 
% In particular, when each \(\psi_i\) is affine, i.e., $\psi_i(\cdot)=\langle a_i, \cdot \rangle + b_i$, this condition reduces to checking that each active vector $a_i$ is a vertex of the convex hull of the active vectors, which depends only on the problem data and can often be verified before running the algorithm. This provides a concrete nondegeneracy condition for checking Assumption~\ref{ass:linear_independence} in structured examples (see Examples 1--3 below).
}
\end{remark}

Assumption~\ref{ass:linear_independence} is not restrictive. Before we prove the theoretical guarantees of perturbed DCA for computing d-stationary points of \eqref{eq:dc_general}, we first show that Assumption~\ref{ass:linear_independence} holds for the following examples we discussed in Section~\ref{sec:DC}.  {If \(\mathcal{M}(x^*)\) is a singleton, Assumption~\ref{ass:linear_independence} holds trivially. Hence, we only need to consider the case \(|\mathcal{M}(x^*)|\geq2\).}

\paragraph{Example 1 ($K$-sparse regularization)} Without loss of generality, we assume $\lambda=1$ in \eqref{eq:dc_ksparse} for simplicity.
An index $i \in \mathcal{M}(x^*)$ corresponds to a subset $\mathcal{J}_i \subset [n]$ with $|\mathcal{J}_i|=K$, representing the indices of the $K$ largest absolute coordinates of $x^*$. Since $\|\nabla \psi_i(x^*)\|^2 = K$ for all $i\in\mc M(x^*)$, we can choose $d = \nabla \psi_i(x^*) / \sqrt{K}$. For any $j \in \mathcal{M}(x^*) \setminus \{i\}$, the fact that $\mc J_j \neq \mc J_i$ implies $\langle \nabla \psi_j(x^*), \nabla \psi_i(x^*) \rangle < K$. Consequently, $\langle \nabla \psi_i(x^*) - \nabla \psi_j(x^*), d \rangle > 0$, ensuring $\mathcal{A}_i^* \neq \emptyset$.

\paragraph{Example 2 (Capped $\ell_1$ penalty)} Recall that for each index $s \in \mathcal{I} := \{-1,0,1\}^n$, $\psi_s(x) = \sum_{j \in \mathrm{supp}(s)} (s_j x_j - \theta)$ (assuming $\lambda=1$) with $\nabla \psi_s(x) = s$. An index $s \in \mathcal{M}(x^*)$ if $s_j = \mathrm{sgn}(x_j^*)$ for $|x_j^*| > \theta$, $s_j = 0$ for $|x_j^*| < \theta$, and $s_j \in \{\mathrm{sgn}(x_j^*), 0\}$ for $|x_j^*| = \theta$. For any given $s \in \mathcal{M}(x^*)$, we construct a direction $d\in\mb R^n$ component-wise as:
\begin{align*}
d_j = \begin{cases} \mathrm{sgn}(x_j^*) & \text{if } s_j \neq 0, \\ -\mathrm{sgn}(x_j^*) & \text{if } s_j = 0. \end{cases}
\end{align*}
For any distinct active index $s' \in \mathcal{M}(x^*) \setminus \{s\}$, it holds that $(s_j - s'_j)d_j = 1$ if $s_j \neq s'_j$, and $(s_j - s'_j)d_j = 0$ if $s_j = s'_j$. Consequently, $\langle \nabla \psi_s(x^*) - \nabla \psi_{s'}(x^*), d/\|d\| \rangle > 0$, which guarantees $\mathcal{A}_s^* \neq \emptyset$ for all $s \in \mathcal{M}(x^*)$.

\paragraph{Example 3 ($K$-medians clustering)} 
Each $i \in \mathcal{M}(\mu^*)$ corresponds to an assignment mapping $\pi_i: [n] \to [K]$ where $\pi_i(k) \in \argmin_{j \in [K]} \{\|\mu_j^* - a_k\|_1\}$. The complete gradient vector $\nabla \psi_i(\mu^*)$ is defined as:
\begin{align*} 
\nabla \psi_i(\mu^*) = \left(\nabla_{\mu_1} \psi_i(\mu^*), \nabla_{\mu_2} \psi_i(\mu^*), \ldots, \nabla_{\mu_K} \psi_i(\mu^*)\right) \in \mathbb{R}^{K \times d}
\end{align*}
where for each cluster $j$ and coordinate $r$:
\begin{align*} 
\nabla_{\mu_j^{(r)}} \psi_i(\mu^*) = \underbrace{\frac{1}{n}\sum_{k=1}^n \mathrm{sgn}(\mu_j^{*(r)} - a_k^{(r)})}_{C_j^{(r)}} - \underbrace{\frac{1}{n}\sum_{k:\pi_i(k)=j} \mathrm{sgn}(\mu_j^{*(r)} - a_k^{(r)})}_{\Delta_{i,j}^{(r)}}.
\end{align*}
The assignment-specific vector is:
\begin{align*} 
\mathbf{\Delta}_i^{(r)} = (\Delta_{i,1}^{(r)}, \Delta_{i,2}^{(r)}, \ldots, \Delta_{i,K}^{(r)}).
\end{align*}

Since the $K$-medians clustering problem is separable over coordinates, it is sufficient for us to consider for any fixed coordinate $r \in [d]$. Moreover, since $C_j^{(r)}$ is a constant over different indices $i \in \mathcal{M}(\mu^*)$, we only need to focus on the values of $\Delta_{i,j}^{(r)}$. Without loss of generality, we assume that $\mu_1^{*(r)} < \mu_2^{*(r)} < \cdots < \mu_K^{*(r)}$. Here, we assume the following \emph{coordinate-wise boundary uniqueness condition}: $\forall 1 \leq j \leq K-1$,
\begin{align*}
\left|\mathcal{B}_{j,j+1}^{(r)}\right| \leq 1 \quad \text{where} \quad \mathcal{B}_{j,j+1}^{(r)} = \left\{a_k \in \mathbb{R}^d : \left|a_k^{(r)} - \mu_j^{*(r)}\right| = \left|a_k^{(r)} - \mu_{j+1}^{*(r)}\right|\right\}.
\end{align*}
If $b \in \mathcal{B}_{j,j+1}^{(r)}$ for some $1 \leq j \leq K-1$, we have
\begin{align*}
\mu_j^{*(r)} < b^{(r)} < \mu_{j+1}^{*(r)}, \quad \mathrm{sgn}(\mu_j^{*(r)} - b^{(r)}) = -1 \quad \text{and} \quad \mathrm{sgn}(\mu_{j+1}^{*(r)} - b^{(r)}) = 1.
\end{align*}
We first consider the case when $K = 2$. If $\mathcal{B}_{1,2}^{(r)} = \emptyset$, Assumption \ref{ass:linear_independence} holds trivially. Here {and} below, we only consider the cases where $\left|\mathcal{B}_{1,2}^{(r)}\right| = 1$. There are two different assignments $\pi_{i_1}$ and $\pi_{i_2}$:
\begin{align*}
i_1: & \quad \pi_{i_1}(b) = 1 \quad \Rightarrow \quad \mathbf{\Delta}_{i_1}^{(r)} = \left(-\frac{1}{n}, 0\right), \\
i_2: & \quad \pi_{i_2}(b) = 2 \quad \Rightarrow \quad \mathbf{\Delta}_{i_2}^{(r)} = \left(0, \frac{1}{n}\right).
\end{align*}
In this case, Assumption~\ref{ass:linear_independence} holds directly according to Remark \ref{remark: sep_assump3}.

Now, we consider the case $K = 3$ with two boundary points $b_1 \in \mathcal{B}_{1,2}^{(r)}$ and $b_2 \in \mathcal{B}_{2, 3}^{(r)}$. There are four different assignments $\{\pi_{i_1}, \pi_{i_2}, \pi_{i_3}, \pi_{i_4}\}$:
\begin{align*}
i_1: & \quad \pi_{i_1}(b_1) = 1, \pi_{i_1}(b_2) = 2 \quad \Rightarrow \quad \mathbf{\Delta}_{i_1}^{(r)} = \left(-\frac{1}{n}, -\frac{1}{n}, 0\right), \\
i_2: & \quad \pi_{i_2}(b_1) = 2, \pi_{i_2}(b_2) = 2 \quad \Rightarrow \quad \mathbf{\Delta}_{i_2}^{(r)} = \left(0, 0, 0\right), \\
i_3: & \quad \pi_{i_3}(b_1) = 1, \pi_{i_3}(b_2) = 3 \quad \Rightarrow \quad \mathbf{\Delta}_{i_3}^{(r)} = \left(-\frac{1}{n}, 0, \frac{1}{n}\right), \\
i_4: & \quad \pi_{i_4}(b_1) = 2, \pi_{i_4}(b_2) = 3 \quad \Rightarrow \quad \mathbf{\Delta}_{i_4}^{(r)} = \left(0, \frac{1}{n}, \frac{1}{n}\right).
\end{align*}
We can naturally classify the four assignments into two disjoint groups by the value of the first component:\\
\textbf{Group A}: $i_1$ and $i_3$:
\begin{align*}
\mathbf{\Delta}_{i_1}^{(r)} = \left(-\frac{1}{n}, -\frac{1}{n}, 0\right), \quad 
\mathbf{\Delta}_{i_3}^{(r)} = \left(-\frac{1}{n}, 0, \frac{1}{n}\right)
\end{align*}
\textbf{Group B}: $i_2$ and $i_4$
\begin{align*}
\mathbf{\Delta}_{i_2}^{(r)} = \left(0, 0, 0\right), \quad
\mathbf{\Delta}_{i_4}^{(r)} = \left(0, \frac{1}{n}, \frac{1}{n}\right).
\end{align*}
It is straightforward to verify that
\begin{align*}
{\rm conv}\left(\{\mathbf{\Delta}_{i_1}^{(r)}, \mathbf{\Delta}_{i_3}^{(r)}\}\right) \bigcap {\rm conv}\left(\{\mathbf{\Delta}_{i_2}^{(r)}, \mathbf{\Delta}_{i_4}^{(r)}\}\right) = \emptyset.
\end{align*}
Moreover, within each group, it reduces to the case for $K = 2$. Therefore, it follows from Remark \ref{remark: sep_assump3} that Assumption \ref{ass:linear_independence} holds for the case $K = 3$. 

By an induction argument, we can similarly generalize the discussion above for the cases with $K > 3$. Therefore, we verified Assumption~\ref{ass:linear_independence} under the coordinate-wise boundary uniqueness condition for the $K$-medians clustering problem.

We now move on to establish the theoretical guarantees of the perturbed DCA. If $\mathcal M(x^*)$ is not a singleton, define
\begin{equation*}
\begin{aligned}
    a_{ij}&:=\nabla\psi_i(x^*)-\nabla\psi_j(x^*),\quad
    \beta_i:=\max_{\xi\in\mathbb S^{n-1}}
    \min_{j\in\mathcal M(x^*)\setminus\{i\}}
    \langle a_{ij},\xi\rangle, \\
    D_i&:=\max_{j\in\mathcal M(x^*)\setminus\{i\}}
    \|a_{ij}\|,\text{ and} \quad
    \bar r:=\min_{i\in\mathcal M(x^*)}\frac{\beta_i}{D_i}.
\end{aligned}
\end{equation*}

The Lemma~\ref{lem:selection_prob} below is one of the key theoretical results in this paper, which proves a uniform lower bound on the probability of selecting any active gradient in $\mathcal{S}(x^*)$ at a cluster point $x^*$ under the perturbation mechanism of Algorithm~\ref{alg:pDCA}.

\begin{lemma}[Uniform lower bound on selection probability]
\label{lem:selection_prob}
Suppose that Assumptions~\ref{ass:coercive}--\ref{ass:linear_independence} hold. Let $\{x^k\}$ and $\{\hat{x}^k\}$ be the sequences generated by Algorithm~\ref{alg:pDCA}, $x^*$ be a cluster point of $\{x^k\}$ and $\{x^k\}_{k \in \mathcal{K}}$ be a subsequence converging to $x^*$. {If either $\mathcal M(x^*)$ is a singleton or there exist $C\in(0,\bar r)$ and an infinite subset $\mc K_{\mr{sub}}\subseteq\mathcal K$ satisfying
\begin{align}\label{eq:rate-radii-ratio}
    \frac{\|x^k-x^*\|}{\alpha_k}\le C
\end{align}
for all sufficiently large $k\in\mc K_{\mr{sub}}$},
% {If there exists an infinite subset $\mc K_{\mr{sub}}\subseteq\mc K$ such that $\|x^k-x^*\|=o(\alpha_k)$ for $k\in\mc K_{\mr{sub}}$}, 
then there exists an infinite subset $\mathcal{K}' \subseteq \mathcal{K}$ and a constant $p_{\min} > 0$ such that for any $i \in \mathcal{M}(x^*)$ and all $k \in \mathcal{K}'$:
\begin{align*}
\mathbb{P}\left(\mathcal{S}(\hat{x}^k) = \{\nabla \psi_i(\hat{x}^k)\} \mid \mathcal{F}_k \right) \geq p_{\min} > 0.
\end{align*}
\end{lemma}

\begin{proof}
First, we show $\mathcal{M}(\hat{x}^k) \subseteq \mathcal{M}(x^*)$ for all sufficiently large $k{\in\mc K}$. {For any \(j\notin \mc M(x^*)\) and \(i\in \mc M(x^*)\),
\begin{align*}
    \psi_i(x^*)>\psi_j(x^*).
\end{align*}
Because \(x^k\to x^*\) along \(\mathcal K\) and \(\alpha_k\to0\), we have
\begin{align*}
    \hat x^k=x^k+\alpha_k\xi^k\to x^*
    \quad
    \text{along } \mathcal K.
\end{align*}
Thus, there exists $K_1>0$ such that for all $k\geq K_1$ with \(k\in\mathcal K\),
\begin{align}\label{pf-active-inactive-gap}
    \psi_i(\hat x^k)>\psi_j(\hat x^k),
    \qquad
    \forall j\notin \mc M(x^*),
\end{align}}
% Define the function value gap at $x^*$ as
% \begin{align*}
%     \delta := \psi(x^*) - \max_{j \notin \mathcal{M}(x^*)}\psi_j(x^*) > 0.
% \end{align*}
% Since $x^k \xrightarrow{k \in \mathcal{K}} x^*$, by Assumption~\ref{ass:psi}, there exists a positive integer $K_1$ such that for all $k \in \mathcal{K}$ with $k \ge K_1$, we have $|\psi_i(x^k) - \psi_i(x^*)| \leq L \|x^k - x^*\| < \delta/4$ for all $i \in \mathcal{I}$. This implies that for any $i \in \mathcal{M}(x^*)$ and $j \notin \mathcal{M}(x^*)$,
% \begin{align*}
%     \psi_i(x^k) - \psi_j(x^k) =& (\psi_i(x^k) - \psi_i(x^*)) + (\psi_i(x^*) - \psi_j(x^*)) + (\psi_j(x^*) - \psi_j(x^k)) \\
%     >& -\frac\delta4 + \delta - \frac\delta4 = \frac\delta2>0.
% \end{align*}
% Consider the perturbed point $\hat{x}^k = x^k + e^k$. Since $\|e^k\|=\alpha_k \to 0$ as $k \to \infty$ by assumption, there exists $K_2 \in \mathbb{N}$ such that for all $k \ge K_2$, $\|e^k\| < \delta/(4L)$. By Assumption~\ref{ass:psi}, $|\psi_i(\hat{x}^k) - \psi_i(x^k)| \leq L \|e^k\| < \delta/4$. Consequently, for all $k \in \mathcal{K}$ with $k \ge \max\{K_1, K_2\}$, the strict inequality $\psi_i(\hat{x}^k) > \psi_j(\hat{x}^k)$ holds for any $i \in \mathcal{M}(x^*)$ and $j \notin \mathcal{M}(x^*)$, 
ensuring that $\mathcal{M}(\hat{x}^k) \subseteq \mathcal{M}(x^*)$.

{If \(\mathcal{M}(x^*)=\{i\}\), then we have \(\mathcal{M}(\hat{x}^k)=\{i\}\) for all $k\in\mc K$ with $k\geq K_1$. Therefore,
\begin{align*}
    \mathbb{P}\left(\mathcal{S}(\hat{x}^k)=\{\nabla\psi_i(\hat{x}^k)\}\mid\mathcal{F}_k\right)=1
\end{align*}
for all such \(k\), and the desired conclusion holds with \(p_{\min}=1\). It remains to consider the case \(|\mathcal{M}(x^*)|\geq2\).}

We fix any $i \in \mathcal{M}(x^*)$. { Since \(C<\bar r\le \beta_i/D_i\), we have
\begin{align*}
    \widetilde\eta_i:=\beta_i-D_iC>0.
\end{align*}
Choose \(\xi_i\in\mb S^{n-1}\) such that
\begin{align*}
    \min_{j\in \mc M(x^*)\setminus\{i\}}
    \langle a_{ij},\xi_i\rangle
    =
    \beta_i.
\end{align*}
Define the spherical cap
\begin{align*}
    \widetilde B_i
    :=
    \left\{
        \xi\in\mb S^{n-1}:
        \|\xi-\xi_i\|
        \le
        \frac{\widetilde\eta_i}{4D_i}
    \right\}.
\end{align*}
Since \(\widetilde B_i\) has positive surface measure, there exists
\begin{align*}
    \widetilde p_i:=\mb P(\xi^k\in\widetilde B_i)>0.
\end{align*}
For any \(\xi\in\widetilde B_i\) and any \(j\in \mc M(x^*)\setminus\{i\}\),
\begin{equation}\label{pf-0}
\begin{aligned}
    \langle a_{ij},\xi\rangle
    &\ge
    \langle a_{ij},\xi_i\rangle
    -
    \|a_{ij}\|\,\|\xi-\xi_i\|
    \\
    &\ge
    \beta_i
    -
    D_i\frac{\widetilde\eta_i}{4D_i}
    =
    \beta_i-\frac{\widetilde\eta_i}{4}
    =
    D_iC+\frac{3\widetilde\eta_i}{4}.
\end{aligned}
\end{equation}
Let
\(
    d_k:=x^k-x^*.
\)
For \(i,j\in \mc M(x^*)\), we have
\(    
    \psi_i(x^*)=\psi_j(x^*).
\)
By the \(L\)-smoothness of \(\psi_i\) and \(\psi_j\),
\begin{equation}\label{pf-1}
\begin{aligned}
    \psi_i(\hat x^k)-\psi_j(\hat x^k)
    &\ge
    \langle a_{ij},d_k+\alpha_k\xi^k\rangle
    -
    L\|d_k+\alpha_k\xi^k\|^2
    \\
    &\ge
    \alpha_k\langle a_{ij},\xi^k\rangle
    -
    D_i\|d_k\|
    -
    L(\|d_k\|+\alpha_k)^2
    \\
    &=
    \alpha_k
    \left[
        \langle a_{ij},\xi^k\rangle
        -
        D_i\frac{\|d_k\|}{\alpha_k}
        -
        L\alpha_k
        \left(
            1+\frac{\|d_k\|}{\alpha_k}
        \right)^2
    \right].
\end{aligned}
\end{equation}
For sufficiently large \(k\in\mc K_{\mr{sub}}\), condition~\eqref{eq:rate-radii-ratio} gives
\begin{align*}
    \frac{\|d_k\|}{\alpha_k}\le C.
\end{align*}
If moreover \(\xi^k\in\widetilde B_i\), then by \eqref{pf-0} and \eqref{pf-1},
\begin{align*}
    \psi_i(\hat x^k)-\psi_j(\hat x^k)
    &\ge
    \alpha_k
    \left[
        D_iC+\frac{3\widetilde\eta_i}{4}
        -
        D_iC
        -
        L\alpha_k(1+C)^2
    \right].
\end{align*}
	Since \(\alpha_k\to0\), there exists $K_{2,i}>0$ such that for $k\geq K_{2,i}$ with \(k\in\mc K_{\mr{sub}}\),
\begin{align*}
    L\alpha_k(1+C)^2
    \le
    \frac{\widetilde\eta_i}{4}.
\end{align*}
Hence
\begin{align}\label{pf-select-specific-i}
    \psi_i(\hat x^k)-\psi_j(\hat x^k)
    \ge
    \frac{\widetilde\eta_i}{2}\alpha_k
    >
    0,
    \qquad
    \forall j\in \mc M(x^*)\setminus\{i\}.
\end{align}
Combining \eqref{pf-active-inactive-gap} and \eqref{pf-select-specific-i}, we obtain
\begin{align*}
    \mathcal{M}(\hat x^k)=\{i\}
\end{align*}
	whenever $k\in\mathcal{K}'_i := \{k \in \mathcal{K}_{\mr{sub}} \mid k \ge \max\{K_1, K_{2,i}\}\}$ and \(\xi^k\in\widetilde B_i\). Therefore,
\begin{align*}
    \mathcal{S}(\hat x^k)=\{\nabla\psi_i(\hat x^k)\}.
\end{align*}
Consequently,
\begin{align*}
    \mb P\left(
        \mathcal{S}(\hat x^k)=\{\nabla\psi_i(\hat x^k)\}
        \mid\mathcal F_k
    \right)
    \ge
    \mb P(\xi^k\in\widetilde B_i\mid\mathcal F_k)
    =
    \mb P(\xi^k\in\widetilde B_i)
    =
    \widetilde p_i>0.
\end{align*}
}
	{Since \(\mathcal M(x^*)\) is finite, we can take \(K_2:=\max_{i\in\mathcal M(x^*)}K_{2,i}\), \(\mathcal K':=\{k\in\mathcal K_{\mr{sub}}\mid k\ge \max\{K_1, K_2\}\}\), and \(p_{\min}:=\min_{i\in\mathcal M(x^*)}\widetilde p_i>0\). Then the same infinite index set \(\mathcal K'\) and the same lower bound \(p_{\min}\) work for all \(i\in\mathcal M(x^*)\), which completes the proof.}
\end{proof}

\begin{remark}
    Indeed, Assumption~\ref{ass:linear_independence} is also necessary for guaranteeing $p_{\min} > 0$ in Lemma~\ref{lem:selection_prob} in general. To see this, consider the counterexample with $\psi(x) = \max\{x, -x, 0\}$ at $x^* = 0$. 
\end{remark}

\begin{remark}
    Intuitively, the perturbation should not shrink so fast that it misses some active pieces near $x^*$. The relative perturbation condition~\eqref{eq:rate-radii-ratio} ensures that the random perturbation is able to reach every active region around the cluster point with positive probability. This condition can be satisfied when the iterates approach $x^*$ at least as the perturbation radius decays.
    % the local update mapping is contractive around the cluster point. For instance, suppose locally that $\phi=h+\Phi$, where $h$ is proper closed convex and $\Phi$ is strongly convex, and suppose the active gradients of $\psi_i$ are locally Lipschitz with curvature smaller than that of $\Phi$. Then the proximal update map can be locally contractive. If the perturbation radii decay regularly, such as $\alpha_{k+1}/\alpha_k$ being bounded away from zero and close to one, this contraction can imply a bound of the form $\|x^k-x^*\|/\alpha_k\le C$ along the relevant subsequence.
\end{remark}

Now, we are ready to prove the main convergence guarantee of perturbed DCA for solving the nonsmooth DC program~\eqref{eq:dc_general} in Theorem~\ref{thm:main}. 
% {This result formalizes the main theoretical contribution of perturbed DCA: despite solving only one randomly selected proximal subproblem per iteration, the method can still guarantee subsequential convergence to d-stationary points when the perturbation radii satisfy the stated relative condition~\eqref{eq:rate-radii-ratio}.}

\begin{theorem}
\label{thm:main}
Suppose that Assumptions~\ref{ass:coercive}--\ref{ass:linear_independence} hold. Let \(\{x^k\}\) be the sequence generated by Algorithm~\ref{alg:pDCA}. %Then, 
% Under the conditions of Lemma~\ref{lem:selection_prob}, every cluster point of \(\{x^k\}\) is a d-stationary point of $\zeta$ almost surely.
% {Let $x^*$ be any cluster point of \(\{x^k\}\). If the conditions of Lemma~\ref{lem:selection_prob} hold for $x^*$}, then $x^*$ is a d-stationary point of $\zeta$ almost surely.
With probability one, every cluster point $x^*$ for which the conditions of Lemma~\ref{lem:selection_prob} hold is a d-stationary point of $\zeta$.
\end{theorem}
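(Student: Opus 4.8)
The plan is to combine the two key lemmas already established. Lemma~\ref{lem:boundedness} gives us boundedness of $\{x^k\}$ and, crucially, $\sum_k \mb E_k[\|x^{k+1}-\hat x^k\|^2] < \infty$ almost surely, which (via $\|x^{k+1}-x^k\| \le \|x^{k+1}-\hat x^k\| + \alpha_k$ and summability of $\alpha_k^2$) forces $\|x^{k+1}-\hat x^k\| \to 0$ and $\|x^{k+1}-x^k\|\to 0$ along a full-probability event. Fix a cluster point $x^*$ with a subsequence $x^k \xrightarrow{k\in\mc K} x^*$. The goal is to show $x^*$ satisfies the characterization in Proposition~\ref{pro:d-stationary}: for every $i \in \mc M(x^*)$, $x^* = \argmin_x \{\phi(x) - \langle \nabla\psi_i(x^*), x-x^*\rangle + \frac\sigma2\|x-x^*\|^2\}$, equivalently $\nabla\psi_i(x^*) \in \partial\phi(x^*)$ (so $x^*$ is the prox fixed point).

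\textbf{Main argument.} First, I would verify that the hypothesis of Lemma~\ref{lem:selection_prob} — existence of an infinite $\mc K_{\mr{sub}}\subseteq\mc K$ with $\|x^k - x^*\| = o(\alpha_k)$ — is automatically available, or else handle the complementary case directly. This is the subtle point: if $\|x^k-x^*\|$ does not go to zero faster than $\alpha_k$ along any subsequence, one needs a separate (and in fact easier) argument. I expect the intended reading is that along $\mc K$ one can extract such a sub-subsequence because $x^k \to x^*$ while $\alpha_k$ is the ``resolution'' of the perturbation; if not, the perturbation $e^k$ dominates $x^k - x^*$ and $\hat x^k - x^* \approx \alpha_k\xi^k$ still, so the conclusion of Lemma~\ref{lem:selection_prob} should hold anyway. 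I would state this carefully. Granting Lemma~\ref{lem:selection_prob}, we obtain an infinite $\mc K' \subseteq \mc K$ and $p_{\min}>0$ such that for every fixed $i\in\mc M(x^*)$ and all $k\in\mc K'$, $\mb P(\mathcal S(\hat x^k) = \{\nabla\psi_i(\hat x^k)\}\mid\mc F_k) \ge p_{\min}$.

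\textbf{Borel--Cantelli step.} Since $\mc K'$ is infinite and the conditional selection probabilities are bounded below by $p_{\min}$, the conditional Borel--Cantelli lemma (the Lévy extension, \cite[Thm. 5.3.2]{durrett2019probability}) implies that, almost surely, the event $\{g^k = \nabla\psi_i(\hat x^k)\}$ occurs for infinitely many $k\in\mc K'$. Restrict to such an infinite index set $\mc K_i''$. For $k\in\mc K_i''$, the update \eqref{eq: update_x_pdca} with $g^k = \nabla\psi_i(\hat x^k)$ gives the optimality condition
\[
0 \in \partial\phi(x^{k+1}) - \nabla\psi_i(\hat x^k) + \sigma(x^{k+1} - \hat x^k),
\]
i.e. $\nabla\psi_i(\hat x^k) - \sigma(x^{k+1}-\hat x^k) \in \partial\phi(x^{k+1})$. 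Now pass to the limit along $\mc K_i''$: $\hat x^k = x^k + e^k \to x^*$ (since $x^k\to x^*$ on $\mc K$ and $\|e^k\|=\alpha_k\to0$), hence $x^{k+1}\to x^*$ because $\|x^{k+1}-\hat x^k\|\to 0$; by continuity of $\nabla\psi_i$, $\nabla\psi_i(\hat x^k)\to\nabla\psi_i(x^*)$; and $\sigma(x^{k+1}-\hat x^k)\to0$. Using closedness of the graph of $\partial\phi$ (outer semicontinuity of the subdifferential of a proper closed convex function, \cite[Theorem 24.4]{rockafellar1970convex}), together with $\phi(x^{k+1})\to\phi(x^*)$ — which follows since $\zeta(x^k)$ converges and $\psi$ is continuous — we conclude $\nabla\psi_i(x^*)\in\partial\phi(x^*)$. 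By Proposition~\ref{pro:d-stationary}, and since $i\in\mc M(x^*)$ was arbitrary, $x^*$ is a d-stationary point of $\zeta$. Intersecting the finitely-many full-probability events over $i\in\mc M(x^*)$ (the index set $\mc M(x^*)$ is finite) preserves almost-sure validity.

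\textbf{Main obstacle.} The chief difficulty is the $o(\alpha_k)$ hypothesis feeding Lemma~\ref{lem:selection_prob}: establishing that along the converging subsequence the perturbation scale $\alpha_k$ is genuinely ``larger'' than the residual $\|x^k-x^*\|$ (or that the argument survives when it isn't), and making the conditional-Borel--Cantelli bookkeeping rigorous across the nested subsequences $\mc K \supseteq \mc K' \supseteq \mc K_i''$ while keeping everything on a single almost-sure event. A secondary technical care point is justifying $\phi(x^{k+1})\to\phi(x^*)$ and the subdifferential graph-closedness passage, but these are standard for closed proper convex $\phi$.
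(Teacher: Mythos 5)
Your proposal is correct and follows essentially the same route as the paper's proof: Lemma~\ref{lem:boundedness} plus a (conditional) Borel--Cantelli argument to get $\|x^{k+1}-\hat x^k\|\to 0$, Lemma~\ref{lem:selection_prob} plus the second Borel--Cantelli lemma to extract, for each $i\in\mathcal{M}(x^*)$, a subsequence on which $g^k=\nabla\psi_i(\hat x^k)$, and then a limit passage in the optimality condition of \eqref{eq: update_x_pdca} to invoke Proposition~\ref{pro:d-stationary}. The only (cosmetic) difference is that you pass to the limit via graph-closedness of $\partial\phi$ applied to $\nabla\psi_i(\hat x^k)-\sigma(x^{k+1}-\hat x^k)\in\partial\phi(x^{k+1})$, whereas the paper takes limits in the minimizing inequality using lower semicontinuity of $\phi$; both are valid, and your worry about the $o(\alpha_k)$ condition is moot since the theorem explicitly assumes the hypotheses of Lemma~\ref{lem:selection_prob}.
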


\begin{proof}

From Lemma~\ref{lem:boundedness}-(ii), we have
\begin{align*}
\sum_{k=0}^{\infty} \mathbb{E}_k[\|x^{k+1} - \hat{x}^k\|^2] < \infty \quad \text{a.s.}.
\end{align*}
By \cite[Thm.~1.6.4]{durrett2019probability}, we have that: $\forall\varepsilon>0$,
\begin{align*}
    \mb P\left(\|x^{k+1} - \hat{x}^k\|\geq\varepsilon\mid\mc F_k\right) \leq \frac{\mb E_k[\|x^{k+1} - \hat{x}^k\|^2]}{\varepsilon^2}.
\end{align*}
Thus, $\sum_k\mb P(\|x^{k+1} - \hat{x}^k\|\geq\varepsilon\mid\mc F_k)<\infty$, which implies $\|x^{k+1} - \hat{x}^k\|\to0$ almost surely by Borel--Cantelli lemma \cite[Thm.~2.3.1]{durrett2019probability}.  Thus, we have
\begin{align*}
\|x^{k+1} - x^k\| \le \|x^{k+1} - \hat{x}^k\| + \|e^k\| \to 0 \quad \text{a.s. as $k \to \infty$}.
\end{align*}

By Lemma~\ref{lem:boundedness}-(i), the sequence $\{x^k\}$ is bounded almost surely. Let $x^*$ be a cluster point of $\{x^k\}$, and consider a subsequence $\{x^k\}_{k \in \mathcal{K}}$ converging to $x^*$. By Lemma~\ref{lem:selection_prob}, there exists an infinite subset $\mathcal{K}' \subseteq \mathcal{K}$ and $p_{\min}>0$ such that for each $i \in \mathcal{M}(x^*)$,
$$\mathbb{P}\left(\mathcal{S}(\hat{x}^k) = \{\nabla \psi_i(\hat{x}^k)\} \mid \mathcal{F}_k\right)\geq p_{\min} > 0 \quad  \forall k \in \mathcal{K}'.$$
% By the second Borel--Cantelli lemma \cite[Thm. 2.3.7]{durrett2019probability}, for each $i \in \mathcal{M}(x^*)$, the event $$E_k^i := \left\{\mathcal{S}(\hat{x}^k) = \{\nabla \psi_i(\hat{x}^k)\}\right\}$$ occurs infinitely often with probability one. 
We fix a full-measure event on which the above subsequence $\mathcal K'$ and the lower bound hold. Conditional on this event, applying the {conditional Borel--Cantelli lemma~\cite[Thm.~5.3.2]{durrett2019probability}} to the events $$E_k^i := \left\{\mathcal{S}(\hat{x}^k) = \{\nabla \psi_i(\hat{x}^k)\}\right\},\qquad k\in\mathcal K',$$ implies that, for each $i \in \mathcal{M}(x^*)$, the event $E_k^i$ occurs infinitely often along $\mathcal K'$ almost surely. Consequently, for any $i \in \mathcal{M}(x^*)$, we can find a further subsequence $\mathcal{K}_i \subseteq \mathcal{K}' \subseteq \mathcal{K}$ such that $\mathcal{S}(\hat{x}^k) = \{\nabla \psi_i(\hat{x}^k)\}$ for all $k \in \mathcal{K}_i$ almost surely. On this subsequence $\{x^k\}_{k \in \mathcal{K}_i}$, by the fact that  $x^{k+1}$ is the unique minimizer of \eqref{eq: update_x_pdca} in Algorithm~\ref{alg:pDCA} for each $k$, we have for all $x \in \mathbb{R}^n$,
\begin{align} \label{eq:optimality_k}
\notag&\phi(x^{k+1}) - \langle \nabla \psi_i(\hat{x}^k), x^{k+1} - \hat{x}^k \rangle + \frac{\sigma}{2} \|x^{k+1} - \hat{x}^k\|^2 \\ \leq& \phi(x) - \langle \nabla \psi_i(\hat{x}^k), x - \hat{x}^k \rangle + \frac{\sigma}{2} \|x - \hat{x}^k\|^2.
\end{align}
Recall that $x^k \xrightarrow{k\in\mathcal{K}_i} x^*$, $\hat{x}^k \xrightarrow{k\in\mathcal{K}_i} x^*$, and $\|x^{k+1} - x^k\| \xrightarrow{k\in\mathcal{K}_i} 0$ almost surely, which implies $x^{k+1} \xrightarrow{k\in\mathcal{K}_i} x^*$ almost surely. Combined with the continuity of $\nabla \psi_i$, it follows that $\nabla\psi_i(\hat{x}^k) \xrightarrow{k\in\mathcal{K}_i} \nabla \psi_i(x^*)$ almost surely. Letting $k \to \infty$ along $\mathcal{K}_i$ on both sides of \eqref{eq:optimality_k}, and using the lower semi-continuity of $\phi$, we obtain:
\begin{align*}
\phi(x^*) \leq \phi(x) - \langle \nabla \psi_i(x^*), x - x^* \rangle + \frac{\sigma}{2} \|x - x^*\|^2 \quad\text{a.s.}\quad \forall x \in \mathbb{R}^n.
\end{align*}
This inequality implies that
\begin{align*}
x^* = \argmin_{x \in \mathbb{R}^n} \left\{ \phi(x) - \langle \nabla \psi_i(x^*), x - x^* \rangle + \frac{\sigma}{2} \|x - x^*\|^2 \right\} \quad\text{a.s.}.
\end{align*}
Since $i \in \mathcal{M}(x^*)$ was chosen arbitrarily, $x^*$ is a d-stationary point of $\zeta$ almost surely according to Proposition~\ref{pro:d-stationary}.
\end{proof}

{
\section{Hybrid Perturbed DCA}\label{sec:hybrid-pDCA}

Lemma~\ref{lem:selection_prob} requires an additional condition~\eqref{eq:rate-radii-ratio}, which couples the perturbation radii with the unknown local convergence behavior of the iterates. To resolve the issue, in this section, we propose a hybrid version of perturbed DCA based on randomized multiscale perturbations. Specifically, at each iteration, both the perturbation radius and direction are sampled independently, with the radius distribution assigning positive probability to every nondegenerate interval near zero. Consequently, along any subsequence converging to a cluster point, arbitrarily fine perturbation scales can be extracted almost surely to reach every relevant active region.

For notational simplicity, for any \(z,g\in\mb R^n\), define
\[
    T(z,g)
    :=
    \argmin_{x\in\mb R^n}
    \left\{
        \phi(x)
        -
        \langle g,x-z\rangle
        +
        \frac{\sigma}{2}\|x-z\|^2
    \right\}.
\]
The resulting hybrid perturbed DCA is presented in Algorithm~\ref{alg:hybrid-pdca}. A candidate $y^k$ is computed by solving the subproblem at the perturbed point. If $y^k$ does not yield sufficient decrease relative to $x^k$, a fallback proximal DCA step is performed. The sufficient-decrease safeguard ensures monotonic descent, while the multiscale sampling mechanism removes both the relative-rate condition and any cluster-point-dependent upper bound on the perturbation radii.

\begin{algorithm}[htbp]
\caption{Hybrid perturbed DCA for solving~\eqref{eq:dc_general}}
\label{alg:hybrid-pdca}
\begin{algorithmic}[1]
\setlength{\abovedisplayskip}{2pt}
\setlength{\belowdisplayskip}{2pt}
\REQUIRE \(x^0\in\dom(\zeta)\), \(\sigma>0\),
\(\eta\in(0,\sigma/2)\), and \(\bar\alpha>0\).
\FOR{\(k=0,1,2,\ldots\)}
    \STATE Sample
    \(\alpha_k\sim\operatorname{Unif}(0,\bar\alpha)\) and
    \(\xi^k\sim\operatorname{Unif}(\mb S^{n-1})\) independently of each
    other, and set
    \(
        \hat x^k=x^k+\alpha_k\xi^k.
    \)
    \STATE Choose any \(i_k\in\mathcal M(\hat x^k)\) and set
    \(
        g^k=\nabla\psi_{i_k}(\hat x^k).
    \)
    \STATE Compute
    \(
        y^k=T(\hat x^k,g^k),
        \
        v_k=\|y^k-\hat x^k\|.
    \)
    \IF{
    \begin{align}
    \label{eq:sufficient-decrease-check}
    \zeta(y^k)\le \zeta(x^k)-\eta v_k^2
    \end{align}
    }
        \STATE Set
        \(
        x^{k+1}=y^k,
        \
        r_k=v_k.
        \)
    \ELSE
        \STATE Choose any \(j_k\in\mathcal M(x^k)\) and set
        \(h^k=\nabla\psi_{j_k}(x^k)\).
        \STATE Compute
        \(
        d^k=T(x^k,h^k).
        \)
        \STATE Set
        \(
        x^{k+1}=d^k,
        \
        r_k=\|d^k-x^k\|.
        \)
    \ENDIF
\ENDFOR
\end{algorithmic}
\end{algorithm}

For Algorithm~\ref{alg:hybrid-pdca}, let
\[
\mathcal G_k
:=
\sigma\bigl(
x^0,\alpha_0,\xi^0,x^1,\ldots,
\alpha_{k-1},\xi^{k-1},x^k
\bigr)
\]
denote the history available immediately before \((\alpha_k,\xi^k)\) is sampled. Notice that \(e^k:=\alpha_k\xi^k\) is not uniformly distributed on a ball. Nevertheless, its distribution is absolutely continuous with respect to the Lebesgue measure on \(\{e\in\mb R^n:\|e\|<\bar\alpha\}\). Hence, by Rademacher's theorem~\cite{Rademacher1919,rw1998}, the active-gradient set \(\mc S(\hat x^k)\) is a singleton almost surely.
% Thus, although an arbitrary index \(i_k\in\mc M(\hat x^k)\) may be chosen, the resulting gradient \(\nabla\psi_{i_k}(\hat x^k)\) is almost surely independent of active-index tie-breaking.

% The sufficient-decrease test~\eqref{eq:sufficient-decrease-check} is used only to decide whether the perturbed-point candidate is accepted. If the test fails, the method performs a fallback proximal DCA step, so the iterate is updated at every iteration. Here, \(r_k\) is the descent residual associated with the step actually used: it equals \(\|y^k-\hat x^k\|\) in the accepted branch and \(\|d^k-x^k\|\) in the fallback branch.

We first state the descent property of hybrid perturbed DCA in the following Lemma~\ref{lem:hybrid-descent-boundedness}, which yields boundedness of \(\{x^k\}\) and vanishing residual \(\{r_k\}\).

\begin{lemma}[Sufficient decrease and boundedness]
\label{lem:hybrid-descent-boundedness}
Suppose that Assumption~\ref{ass:coercive} holds. Let \(\{x^k\}\) and \(\{r_k\}\) be generated by Algorithm~\ref{alg:hybrid-pdca}. Then
\begin{equation}
\label{eq:hybrid-uniform-decrease}
\zeta(x^{k+1}) \le \zeta(x^k)-\eta r_k^2,
\qquad \forall k\geq0.
\end{equation}
Consequently, \(\{x^k\}\) is bounded, \(\sum_{k=0}^{\infty}r_k^2<+\infty\), and \(r_k\to0\).
\end{lemma}

\begin{proof}
If the perturbed-point candidate is accepted at iteration \(k\), then by~\eqref{eq:sufficient-decrease-check},
\begin{equation}
\label{eq:hybrid-accept-decrease}
    \zeta(x^{k+1})
    =
    \zeta(y^k)
    \le
    \zeta(x^k)-\eta v_k^2
    =
    \zeta(x^k)-\eta r_k^2.
\end{equation}

If the fallback proximal DCA step is used, then \(x^{k+1}=d^k=T(x^k,h^k)\), where \(h^k=\nabla\psi_{j_k}(x^k)\) and \(j_k\in\mathcal M(x^k)\). Hence, \(\psi(x^k)=\psi_{j_k}(x^k)\). By the convexity of \(\psi_{j_k}\),
\[
\psi_{j_k}(d^k)
\ge
\psi_{j_k}(x^k)+\langle h^k,d^k-x^k\rangle.
\]
Since \(\psi(d^k)\ge\psi_{j_k}(d^k)\), it follows that
\begin{equation}
\label{eq:fallback-psi-bound}
-\psi(d^k)
\le
-\psi_{j_k}(x^k)-\langle h^k,d^k-x^k\rangle.
\end{equation}
Moreover, by the definition of \(d^k\) and the strong convexity of the fallback subproblem,
\[
\phi(d^k)-\langle h^k,d^k-x^k\rangle
+\frac{\sigma}{2}\|d^k-x^k\|^2
\le
\phi(x^k)-\frac{\sigma}{2}\|d^k-x^k\|^2.
\]
Thus,
\begin{equation}
\label{eq:fallback-phi-bound}
\phi(d^k)-\langle h^k,d^k-x^k\rangle
\le
\phi(x^k)-\sigma\|d^k-x^k\|^2
\le
\phi(x^k)-\frac{\sigma}{2}\|d^k-x^k\|^2.
\end{equation}
Combining \eqref{eq:fallback-psi-bound} and \eqref{eq:fallback-phi-bound}, we obtain
\begin{equation}
\label{eq:hybrid-fallback-decrease}
\zeta(x^{k+1})
=
\zeta(d^k)
\le
\zeta(x^k)-\frac{\sigma}{2}\|d^k-x^k\|^2
\le
\zeta(x^k)-\eta r_k^2.
\end{equation}
Equations~\eqref{eq:hybrid-accept-decrease} and \eqref{eq:hybrid-fallback-decrease} prove \eqref{eq:hybrid-uniform-decrease}.

In particular, \(\{\zeta(x^k)\}\) is nonincreasing and
\[
x^k\in\{x\mid \zeta(x)\le\zeta(x^0)\},
\qquad \forall k\ge0.
\]
By Assumption~\ref{ass:coercive}, this level set is bounded, and hence \(\{x^k\}\) is bounded. Furthermore, the lower semicontinuity and coercivity of \(\zeta\) imply that \(\zeta\) is bounded from below. Summing \eqref{eq:hybrid-uniform-decrease} from \(k=0\) to \(N\) gives
\[
\eta\sum_{k=0}^{N}r_k^2
\le
\zeta(x^0)-\zeta(x^{N+1})
\le
\zeta(x^0)-\inf_x\zeta.
\]
Therefore, \(\sum_{k=0}^{\infty}r_k^2<+\infty\), and hence \(r_k\to0\).
\end{proof}

Lemma~\ref{lem:hybrid-descent-boundedness} ensures the existence of the cluster point of \(\{x^k\}\). For each candidate cluster point \(x^*\) and each \(i\in\mathcal M(x^*)\), Assumption~\ref{ass:linear_independence} allows us to choose a compact spherical cap \(B_i\subset\mathcal A_i^*\) and a constant \(\eta_i>0\) such that
\begin{equation}
\label{eq:Bi-margin}
\left\langle
\nabla\psi_i(x^*)-\nabla\psi_j(x^*),d
\right\rangle
\ge \eta_i,
\quad
\forall d\in B_i,\quad
\forall j\in\mathcal M(x^*)\setminus\{i\}.
\end{equation}
Indeed, when \(\mathcal M(x^*)\ne\{i\}\), one may choose a direction in \(\mathcal A_i^*\) and then take a sufficiently small compact spherical cap around it. When \(\mathcal M(x^*)=\{i\}\), we set \(B_i=\mb S^{n-1}\) and regard~\eqref{eq:Bi-margin} as void. 

Define
\[
    p_i:=\mb P(\xi^k\in B_i)>0.
\]
We next establish a local selection property on each fixed scale interval. In particular, it shows that, on a suitable neighborhood of a candidate cluster point, perturbations with directions in $B_i$ select the prescribed active piece uniformly over that interval.

\begin{lemma}[Local multiscale selection]
\label{lem:hybrid-multiscale-selection}
Suppose that Assumptions~\ref{ass:coercive} and \ref{ass:linear_independence} hold, and let \(x^*\) be a cluster point of the sequence \(\{x^k\}\) generated by Algorithm~\ref{alg:hybrid-pdca}. For every \(i\in\mathcal M(x^*)\), there exists \(\bar\rho_i(x^*)>0\) such that, for every fixed $t$ satisfying
\[
0<t<\min\{\bar\rho_i(x^*),\bar\alpha\},
\]
there exists a neighborhood \(\mathcal U_{i,t}\) of \(x^*\) satisfying
\begin{equation}
\label{eq:multiscale-local-dominance}
\psi_i(x+\rho d)>\psi_j(x+\rho d),
\quad
\begin{array}{l}
\forall x\in\mathcal U_{i,t},\quad
\forall \rho\in[t/2,t],\\
\forall d\in B_i,\quad
\forall j\in\mathcal I\setminus\{i\},
\end{array}
\end{equation}
where $B_i$ is defined in \eqref{eq:Bi-margin}.
\end{lemma}

\begin{proof}
Fix \(i\in\mathcal M(x^*)\). For \(\mathcal I=\{i\}\), the conclusion is immediate. We therefore assume that \(\mathcal I\setminus\{i\}\ne\emptyset\). For \(j\in\mathcal M(x^*)\setminus\{i\}\), the integral Taylor formula gives
\[
\begin{aligned}
\psi_i(x^*+\rho d)-\psi_j(x^*+\rho d)=
\int_0^\rho
\left\langle
\nabla\psi_i(x^*+s d)-\nabla\psi_j(x^*+s d),d
\right\rangle\,ds.
\end{aligned}
\]
By~\eqref{eq:Bi-margin}, the continuity of the gradients, the compactness of \(B_i\), and the finiteness of \(\mathcal M(x^*)\), there exists \(\rho_i^{\rm act}>0\) such that
\[
\psi_i(x^*+\rho d)-\psi_j(x^*+\rho d)
\ge \frac{\eta_i}{2}\rho
\]
for all \(0<\rho<\rho_i^{\rm act}\), \(d\in B_i\), and \(j\in\mathcal M(x^*)\setminus\{i\}\).

For every \(j\notin\mathcal M(x^*)\), \(\psi_i(x^*)-\psi_j(x^*)>0\). By continuity, compactness of \(B_i\), and finiteness of \(\mathcal I\), there exists \(\rho_i^{\rm inact}>0\) such that
\[
\psi_i(x^*+\rho d)>\psi_j(x^*+\rho d)
\]
for all \(0<\rho<\rho_i^{\rm inact}\), \(d\in B_i\), and \(j\notin\mathcal M(x^*)\). Set
\[
\bar\rho_i(x^*):=\min\{\rho_i^{\rm act},\rho_i^{\rm inact}\}>0.
\]

Now fix \(t\in(0,\min\{\bar\rho_i(x^*),\bar\alpha\})\). The set \([t/2,t]\times B_i\) is compact, and all the gaps
\[
\psi_i(x^*+\rho d)-\psi_j(x^*+\rho d),
\qquad j\in\mathcal I\setminus\{i\},
\]
are strictly positive on this set. Their minimum over the finitely many indices and the compact set is therefore positive. By the joint continuity of the finitely many functions \((x,\rho,d)\mapsto\psi_i(x+\rho d)-\psi_j(x+\rho d)\), there exists a neighborhood \(\mathcal U_{i,t}\) of \(x^*\) on which all these gaps remain strictly positive. This proves~\eqref{eq:multiscale-local-dominance}.
\end{proof}

The preceding interval-wise property can now be combined with the randomized radii. The next lemma extracts, for every active piece, a subsequence of increasingly fine perturbations that selects that piece almost surely.

\begin{lemma}[Multiscale sampling of every active piece]
\label{lem:hybrid-active-selection}
Suppose that Assumptions~\ref{ass:coercive} and \ref{ass:linear_independence} hold. Let \(\{x^k\}\) be generated by Algorithm~\ref{alg:hybrid-pdca}, \(x^*\) be a cluster point, and \(\{x^k\}_{k\in\mathcal K}\) be the subsequence converges to \(x^*\). Then, for every \(i\in\mathcal M(x^*)\), almost surely, there exists a strictly increasing sequence \(\{k_m\}\subseteq\mathcal K\) such that
\[
\alpha_{k_m}\to0,\qquad
\mathcal M(\hat x^{k_m})=\{i\},\qquad
\hat x^{k_m}\to x^*.
\]
\end{lemma}

\begin{proof}
Fix \(i\in\mathcal M(x^*)\), and choose a deterministic sequence \(t_m\downarrow0\) satisfying
\[
0<t_m<\min\{\bar\rho_i(x^*),\bar\alpha\},
\qquad \forall m\in\mb N,
\]
where \(\bar\rho_i(x^*)\) is given by Lemma~\ref{lem:hybrid-multiscale-selection}. For each fixed \(m\), define
\[
E_k^{i,m}
:=
\left\{
\alpha_k\in[t_m/2,t_m],
\quad
\xi^k\in B_i
\right\}.
\]
Since \(x^k\to x^*\) along \(\mathcal K\), there exists \(K_{i,m}\) such that
\[
x^k\in\mathcal U_{i,t_m},
\qquad
\forall k\in\mathcal K,\quad k\ge K_{i,m}.
\]
Thus, by Lemma~\ref{lem:hybrid-multiscale-selection}, for all such \(k\),
\[
E_k^{i,m}
\quad\Longrightarrow\quad
\mathcal M(\hat x^k)=\{i\}.
\]

By the randomized sampling rule in Algorithm~\ref{alg:hybrid-pdca}, for every fixed \(m\),
\begin{equation*}
\begin{aligned}
\mb P(E_k^{i,m}\mid\mathcal G_k)
=
\mb P\bigl(\alpha_k\in[t_m/2,t_m]\bigr) \mb P(\xi^k\in B_i)
=
\frac{t_m}{2\bar\alpha}\,p_i
=:
q_{i,m}>0.
\end{aligned}
\end{equation*}
Consequently,
\[
\sum_{k\in\mathcal K}
\mb P(E_k^{i,m}\mid\mathcal G_k)
=+\infty
\]
for every fixed \(m\). The conditional Borel--Cantelli lemma~\cite[Thm.~5.3.2]{durrett2019probability} implies that, for every fixed \(m\), \(E_k^{i,m}\) occurs infinitely often along \(\mathcal K\) almost surely. Since \(m\in\mb N\) is countable, these conclusions hold simultaneously for all \(m\) on a common event of probability one.

On this event, recursively choose
\[
k_1<k_2<\cdots,\qquad k_m\in\mathcal K,
\]
such that \(k_m\ge K_{i,m}\) and \(E_{k_m}^{i,m}\) occurs. Then
\[
0<\alpha_{k_m}\le t_m\to0
\]
and \(\mathcal M(\hat x^{k_m})=\{i\}\). Finally,
\[
\|\hat x^{k_m}-x^*\|
\le
\|x^{k_m}-x^*\|+\alpha_{k_m}\to0,
\]
which completes the proof.
\end{proof}

\begin{remark}
The uniform distribution of \(\alpha_k\) is not essential. The same multiscale argument applies to any radius distribution \(\mu\) satisfying
\[
\mu([a,b])>0,
\qquad
\forall\,0<a<b<\bar\alpha.
\]
What matters is that the method can sample arbitrarily fine perturbation scales with positive probability. Neither a relative-rate condition such as \eqref{eq:rate-radii-ratio} nor a cluster-point-dependent upper bound on \(\bar\alpha\) is required.
% We do not claim that the full random sequence \(\{\alpha_k\}\) converges to zero; only suitable subsequences at arbitrarily fine scales are extracted when needed.
\end{remark}

We now combine the descent property with the multiscale active-piece selection result to establish d-stationarity of every cluster point in the following theorem.

\begin{theorem}
\label{thm:hybrid-pdca-dstationarity}
Suppose that Assumptions~\ref{ass:coercive} and \ref{ass:linear_independence} hold. Let \(\{x^k\}\) be generated by Algorithm~\ref{alg:hybrid-pdca}. Then, with probability one, every cluster point of \(\{x^k\}\) is a d-stationary point of \(\zeta\).
\end{theorem}

\begin{proof}
By Lemma~\ref{lem:hybrid-descent-boundedness}, \(\{x^k\}\) is bounded. Fix a cluster point \(x^*\), and let \(\{x^k\}_{k\in\mathcal K}\) be a subsequence satisfying
\[
x^k\to x^*,
\qquad k\in\mathcal K\to\infty.
\]
Fix any \(i\in\mathcal M(x^*)\). By Lemma~\ref{lem:hybrid-active-selection}, almost surely there exists a subsequence \(\mathcal K_i\subseteq\mathcal K\) such that
\[
\alpha_k\to0,\qquad
\mathcal M(\hat x^k)=\{i\},\qquad
\hat x^k\to x^*,
\quad k\in\mathcal K_i.
\]
Along this subsequence,
\[
g^k=\nabla\psi_i(\hat x^k).
\]

We next show that the candidate residual \(v_k=\|y^k-\hat x^k\|\) vanishes along \(\mathcal K_i\). If the candidates $y^k$ are accepted infinitely many times, then \(v_k=r_k\to0\) by Lemma~\ref{lem:hybrid-descent-boundedness}. 

Suppose instead that the candidates $y^k$ are accepted only finitely many times; then they must be rejected infinitely many times. When $y^k$ is rejected, we have
\begin{equation}
\label{eq:hybrid-rejection}
\zeta(y^k)>\zeta(x^k)-\eta v_k^2.
\end{equation}
Define the Bregman remainder of \(\psi_i\) by
\[
D_i(x,z)
:=
\psi_i(x)-\psi_i(z)
-\langle\nabla\psi_i(z),x-z\rangle
\ge0.
\]
Since
\[
y^k=T\bigl(\hat x^k,\nabla\psi_i(\hat x^k)\bigr),
\]
the strong convexity of the candidate subproblem, with \(x^k\) as the comparator, yields
\begin{equation}
\label{eq:hybrid-candidate-phi-bound}
\begin{aligned}
\phi(y^k)
\le{}&
\phi(x^k)
+\langle g^k,y^k-x^k\rangle
+\frac{\sigma}{2}\|x^k-\hat x^k\|^2
-\frac{\sigma}{2}\|y^k-\hat x^k\|^2
-\frac{\sigma}{2}\|y^k-x^k\|^2.
\end{aligned}
\end{equation}
Moreover, since \(i\in\mathcal M(\hat x^k)\), the convexity of \(\psi_i\) gives
\begin{equation}
\label{eq:hybrid-candidate-psi-bound}
-\psi(y^k)
\le
-\psi_i(\hat x^k)
-\langle g^k,y^k-\hat x^k\rangle.
\end{equation}
Combining \eqref{eq:hybrid-candidate-phi-bound} and \eqref{eq:hybrid-candidate-psi-bound}, and using \(\|x^k-\hat x^k\|=\alpha_k\), we obtain
\begin{equation}
\label{eq:hybrid-candidate-upper}
\begin{aligned}
\zeta(y^k)
\le{}&
\zeta(x^k)
+\bigl[\psi(x^k)-\psi_i(x^k)\bigr]
+D_i(x^k,\hat x^k)
+\frac{\sigma}{2}\alpha_k^2
-\frac{\sigma}{2}v_k^2
-\frac{\sigma}{2}\|y^k-x^k\|^2.
\end{aligned}
\end{equation}
Combining \eqref{eq:hybrid-rejection} and \eqref{eq:hybrid-candidate-upper} gives
\[
\begin{aligned}
\left(\frac{\sigma}{2}-\eta\right)v_k^2
+\frac{\sigma}{2}\|y^k-x^k\|^2
<
&\ \psi(x^k)-\psi_i(x^k)
+D_i(x^k,\hat x^k)
+\frac{\sigma}{2}\alpha_k^2.
\end{aligned}
\]
Along \(\mathcal K_i\),
\[
x^k\to x^*,\qquad
\alpha_k\to0,\qquad
\hat x^k\to x^*.
\]
Since \(i\in\mathcal M(x^*)\),
\[
\psi(x^k)-\psi_i(x^k)
\to
\psi(x^*)-\psi_i(x^*)=0.
\]
The continuous differentiability of \(\psi_i\) also gives
\[
D_i(x^k,\hat x^k)\to0.
\]
Because \(\eta<\sigma/2\), it follows that \(v_k\to0\) along the rejected iterations in \(\mathcal K_i\). Therefore, whether the candidate is accepted or rejected,
\[
y^k-\hat x^k\to0,
\qquad k\in\mathcal K_i\to\infty.
\]
Since \(\hat x^k\to x^*\), we also have \(y^k\to x^*\).

Finally, the continuity of \(\nabla\psi_i\) implies
\[
g^k=\nabla\psi_i(\hat x^k)\to\nabla\psi_i(x^*).
\]
For any \(x\in\mb R^n\), the optimality of \(y^k=T(\hat x^k,g^k)\) gives
\begin{equation}
\label{eq:hybrid-subproblem-ineq}
\begin{aligned}
&\phi(y^k)
-\langle g^k,y^k-\hat x^k\rangle
+\frac{\sigma}{2}\|y^k-\hat x^k\|^2\\
\le&
\phi(x)
-\langle g^k,x-\hat x^k\rangle
+\frac{\sigma}{2}\|x-\hat x^k\|^2.
\end{aligned}
\end{equation}
Letting \(k\to\infty\) along \(\mathcal K_i\) in \eqref{eq:hybrid-subproblem-ineq}, and using the lower semicontinuity of \(\phi\), the continuity of \(\nabla\psi_i\), and \(y^k,\hat x^k\to x^*\), we obtain
\[
\phi(x^*)
\le
\phi(x)
-\langle\nabla\psi_i(x^*),x-x^*\rangle
+\frac{\sigma}{2}\|x-x^*\|^2,
\qquad
\forall x\in\mb R^n.
\]
Since \(i\in\mathcal M(x^*)\) was arbitrary, this relation holds for every active index. Therefore, by Proposition~\ref{pro:d-stationary}, \(x^*\) is a d-stationary point of \(\zeta\).
\end{proof}

\begin{remark}
The convergence analysis above does not use the global smoothness condition in Assumption~\ref{ass:psi}. 
% Lemma~\ref{lem:hybrid-descent-boundedness} uses only the sufficient-decrease safeguard, convexity, and strong convexity of the proximal subproblem. Lemma~\ref{lem:hybrid-multiscale-selection} uses the continuous differentiability of the finitely many functions \(\psi_i\). 
In the rejected-candidate estimate, continuous differentiability is sufficient to obtain \(D_i(x^k,\hat x^k)\to0\).
% No global estimate of the form
% \[
% D_i(x^k,\hat x^k)
% \le
% \frac{L}{2}\|x^k-\hat x^k\|^2
% \]
% is needed. 
Thus, the theorem requires only Assumptions~\ref{ass:coercive} and \ref{ass:linear_independence}, together with the randomized sampling rule in Algorithm~\ref{alg:hybrid-pdca} and the standing assumption that each \(\psi_i\) is continuously differentiable.
\end{remark}

\begin{remark}
The hybrid construction naturally raises the question of whether a randomized active-index method can similarly be combined with a standard DCA step. During the revision of this manuscript, we found a very recently proposed Hybrid Random Index-DCA~\cite[Alg.~2]{lethi2026finding} follows this idea: it samples one index from \(\mathcal M_\epsilon(x^k)\) and one from \(\mathcal M(x^k)\), solves the two associated subproblems, and selects the candidate with the smaller objective value. However, it still requires constructing the relaxed active index set and solving two subproblems at every iteration. In contrast, hybrid perturbed DCA selects an active gradient at a randomly perturbed point without explicitly constructing \(\mathcal M_\epsilon(x^k)\), and solves a fallback subproblem only when necessary. It therefore avoids the cost and potentially inefficient sampling associated with a large $\mc M_\epsilon(x^k)$. Moreover, perturbed DCA-type methods can be directly extended to apply to a general nonsmooth convex function \(\psi\) due to Rademacher's theorem, whereas random index selection cannot identify a prescribed index with positive probability when the index set is infinite. Hybrid Random Index-DCA~\cite[Alg.~2]{lethi2026finding} is included as a benchmark in our numerical experiments in the next section.
\end{remark}
}

\section{Numerical Experiments}\label{sec:numerical}

In this section, we show some numerical experiment results to demonstrate the effectiveness of the proposed methods for computing d-stationary points of the nonsmooth DC program \eqref{eq:dc_general}. All the numerical experiment results in this section are obtained by running MATLAB (Version R2025b) on a MacBook Air equipped with an Apple M4 chip and 16 GB of memory, under the macOS Sequoia 15.5 operating system. 
Next, we introduce the {benchmark methods and} the stopping criteria of the methods.

\paragraph{Benchmark methods} {We compare the following algorithms in the numerical experiments: perturbed DCA (Algorithm~\ref{alg:pDCA}), hybrid perturbed DCA (Algorithm~\ref{alg:hybrid-pdca}), DCA (Algorithm~\ref{alg:DCA}~\cite{tao1997convex,an2005dc}), revised DCA (Algorithm~\ref{alg:Pang}~\cite[Alg.~1]{pang2017computing}), revised DCA-Rand~\cite[Sec.~5.2]{pang2017computing}, and Hybrid Random Index-DCA~\cite[Alg.~2]{lethi2026finding}. 
% Revised DCA, revised DCA-Rand and Hybrid Random Index-DCA all rely on first constructing the $\epsilon$-active index set $\mc M_\epsilon(x^k)$ at each iteration $k$. 
% To keep the algorithms practically implementable, if $\mc M_\epsilon(x^k)$ contains more than 100 indices, we randomly select 100 indices from $\mc M_\epsilon(x^k)$ and solve the corresponding subproblems only for these selected indices. 
% The difference is that, at each iteration, revised DCA solves all subproblems identified by $\mc M_\epsilon(x^k)$, revised DCA-Rand solves only one subproblem whose index is randomly selected from $\mc M_\epsilon(x^k)$, while Hybrid Random Index-DCA randomly selects one index from $\mc M_\epsilon(x^k)$ and one from $\mc M(x^k)$, computes the associated candidates, and chooses the one with the smaller objective value.
}

\paragraph{Stopping criteria} Without loss of generality, we write $\phi$ in \eqref{eq:dc_general} as $\phi = \phi_1 + \phi_2$, where $\phi_1: \mathbb{R}^n \to (-\infty, +\infty]$ is a proper closed convex function whose proximal mapping is computable, and $\phi_2: \mathcal{X} \subseteq \mathbb{R}^n \to \mathbb{R}$ is a continuously differentiable convex function on the open set $\mathcal{X}$ containing ${\rm dom}(\phi_1)$. For any given $x \in {\rm dom}(\phi)$, we define:
\begin{align}\label{eq:d-sta_error}
\mathcal{R}(x) := \max_{i \in \mc M(x)} \frac{\left\|x - \text{prox}_{\phi_1}\big(x - (\nabla\phi_2(x) - \nabla\psi_i(x))\big)\right\|}{1+\|x\|+\|\nabla\phi_2(x)\|+ \|\nabla\psi_i(x)\|}.
\end{align}
It follows from Proposition~\ref{pro:d-stationary} that $x^* \in {\rm dom}(\phi)$ is a d-stationary point of \eqref{eq:dc_general} if and only if $\mathcal{R}(x^*) = 0$. Therefore, we terminate an algorithm when
\begin{equation*}
% \label{eq: stop-pdca}
\mathcal{R}(x^k) < \tau \quad \mbox{or} \quad k > 10^5 
\end{equation*}
for some predefined $\tau > 0$. Since computing $\mathcal{R}(\cdot)$ in \eqref{eq:d-sta_error} can be time-consuming, in practice, we only check it when
$$
\dfrac{\|{x}^{k+1} - {x}^k\|}{\max\left\{1, \|{x}^{k+1}\|\right\}} < \tau \quad\text{and}\quad k < 10^5.
$$
{In all experiments of this section, we use the stopping accuracy $\tau=10^{-8}$. Each algorithm is run for 10 independent trials on each parameter setting or dataset. The time limit of 300 seconds is also imposed for each algorithm in each trial. If an algorithm does not reach the stopping criteria within the time limit, we terminate it and report the best solution found so far.}
% {Since DCA can only guarantee the convergence to the critical points, 
% % If DCA does not reach the stopping criterion, 
% we stop it when both the relative error $\|x^{k+1}-x^k\|/\max\{1,\|x^{k+1}\|\}$ and the d-stationarity error $\mc R(x^k)$ no longer decrease for a prescribed number of consecutive checks. For revised DCA-Rand, if the candidate update is rejected and resampled for 200 consecutive trials without producing an actual update, we stop the algorithm.}

\subsection{A simple comparison with DCA}

In this subsection, we provide a simple example to partially demonstrate that while both DCA and perturbed DCA solve a single subproblem for a one-step update, perturbed DCA can find a d-stationary point, but DCA is trapped at a critical point.

Consider the following example adopted from \cite{pang2017computing}:
\begin{equation}
\label{ex: pdcavsdca}
\min_{x\in\mathbb{R}}\quad \zeta(x):=\frac{1}{2}x^2 - \max\{-x,0\},
\end{equation}
which is an instance of \eqref{eq:dc_general} by taking $\phi(x) = \frac{1}{2}x^2$, $\psi_1(x) = -x$, $\psi_2(x) = 0$, and $\psi(x) = \max_{i \in\{1,2\}}\{\psi_i(x)\}$. The unique d-stationary point of the above problem is $x^* = -1$ with $\zeta(x^*) = -0.5$.

We compare the perturbed DCA (Algorithm~\ref{alg:pDCA} with $\sigma = 1$) with the standard DCA (Algorithm \ref{alg:DCA} by randomly selecting an index from $\mc M(x^k)$ at each iteration $k$ to compute $g^k \in \partial\psi(x^k)$ \cite[Ex.~8.31]{rw1998}), both starting from $x_0 = 1.5$. The performance of the two algorithms is shown in Fig.~\ref{fig:compare_with_dca}. From the results, we see that the sequence generated by DCA is trapped at the critical point $x = 0$. In contrast, benefiting from the random perturbations, the sequence generated by the perturbed DCA successfully finds the d-stationary point $x = -1$.

\begin{figure}[htbp]
    \centering
    \begin{subfigure}[b]{0.4\textwidth}
        \centering
        \includegraphics[width=\linewidth]{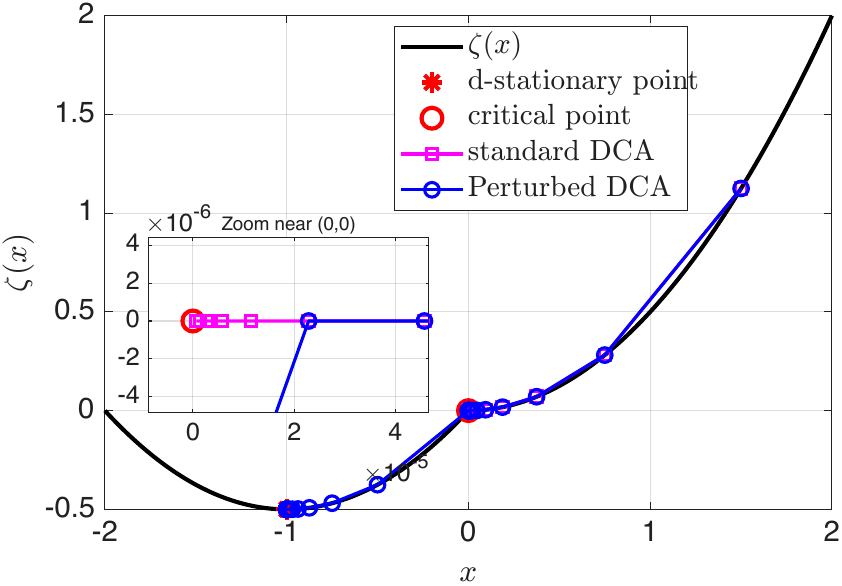}
        \caption{Convergence Path}
        \label{subfig:compare_with_dca_path}
    \end{subfigure}
    \hfill
    \begin{subfigure}[b]{0.4\textwidth}
        \centering
        \includegraphics[width=\linewidth]{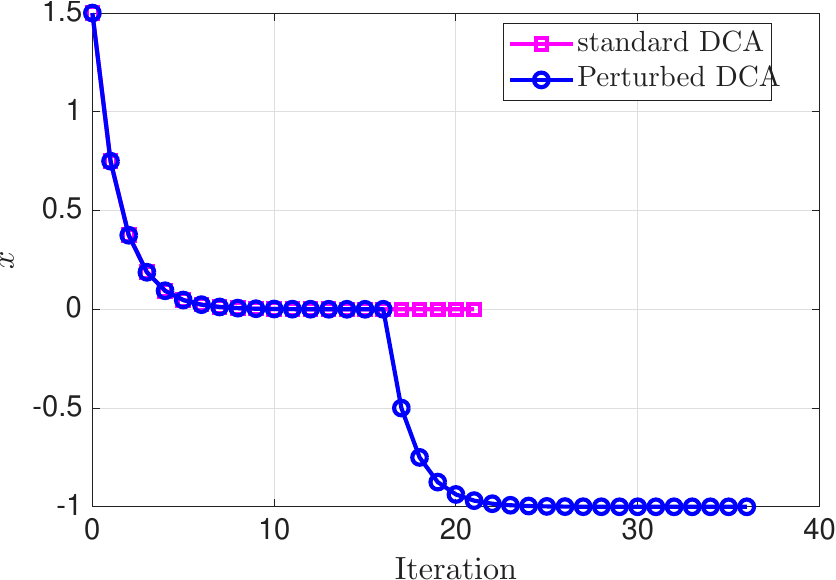}
        \caption{Variable}
        \label{subfig:compare_with_dca_x}
    \end{subfigure}
    
    \vspace{0.5cm}
    
    \begin{subfigure}[b]{0.4\textwidth}
        \centering
        \includegraphics[width=\linewidth]{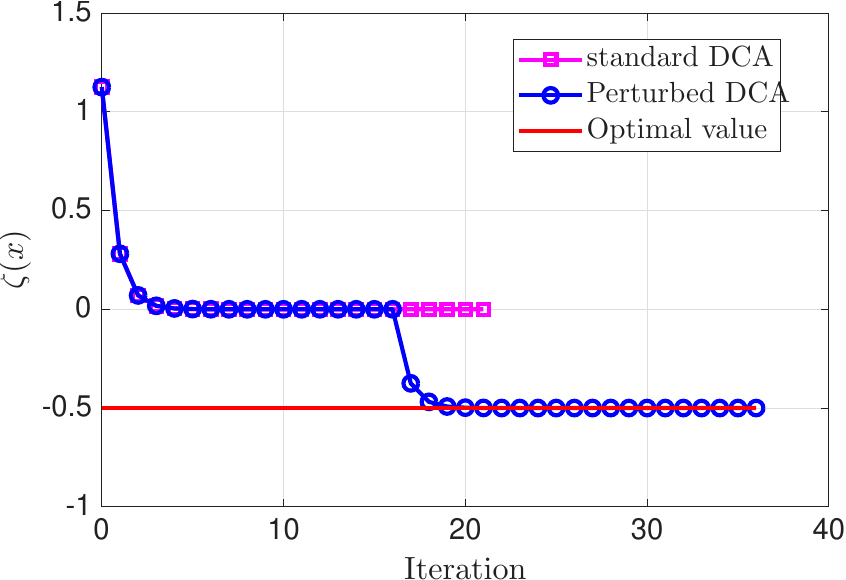}
        \caption{Function Value}
        \label{subfig:compare_with_dca_obj}
    \end{subfigure}
    \hfill
    \begin{subfigure}[b]{0.4\textwidth}
        \centering
        \includegraphics[width=\linewidth]{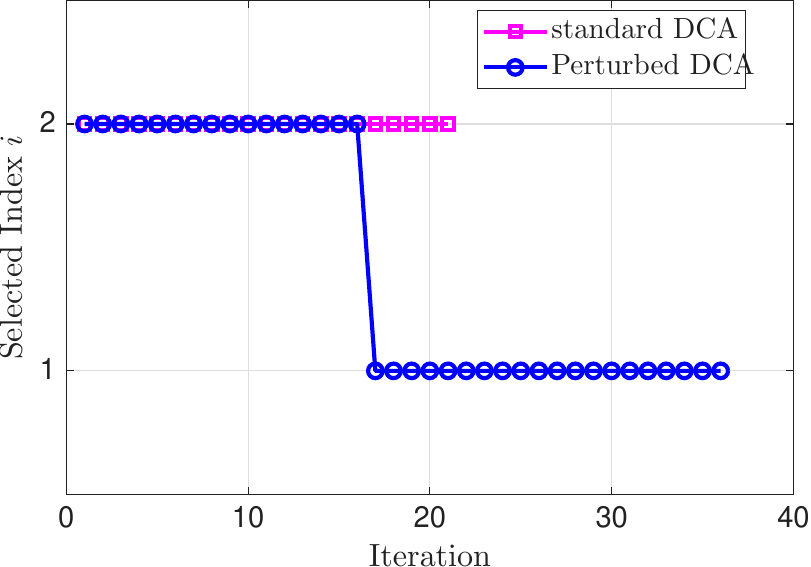}
        \caption{Selected Index}
        \label{subfig:compare_with_dca_index}
    \end{subfigure}
    
    \caption{Comparison of perturbed DCA and standard DCA for solving \eqref{ex: pdcavsdca}.}
    \label{fig:compare_with_dca}
\end{figure}

\subsection{Numerical results on $K$-sparse regularized linear regression problem}\label{subsec:num_Ksparse}

Recall the $K$-sparse regularized linear regression problem~\eqref{eq:dc_ksparse-lr}:
\begin{equation*}
    \min_{x\in\mb R^n}\quad \zeta(x):=\underbrace{\frac12\|Ax-b\|^2 + \lambda\|x\|_1}_{\phi(x)}-\underbrace{\lambda\|x\|_{(K)}}_{\psi(x)},
\end{equation*}
where the data $A \in \mathbb{R}^{m \times n}$, $b \in \mathbb{R}^m$, and the parameter $\lambda > 0$ are given.

\paragraph{Data generation}
For any given positive integers $m$, $n$, and $1 < K < n$, we generate the true sparse signal ${x}_{\text{true}} \in \mathbb{R}^n$ with $K$ non-zero entries at random positions, with values i.i.d. drawn from the standard normal distribution $\mathcal{N}(0, 1)$. Each entry of the measurement matrix ${A} \in \mathbb{R}^{m \times n}$ is i.i.d. drawn from the standard normal distribution $\mathcal{N}(0, 1)$, and then normalized so that the columns have unit length. The observations are obtained as ${b} = {A}{x}_{\text{true}} + {\epsilon}$, where ${\epsilon} \sim \mathcal{N}(0, \varepsilon^2I)$. We set $\varepsilon=0.1$ in our experiments.

\paragraph{Active piece selection}

At each iteration \(k\geq0\), after obtaining the perturbed point \(\hat x^k=x^k+e^k\), we select an affine piece of \(\lambda\|x\|_{(K)}\) from \(\hat x^k\). Let \(\mc I_k\) be the index set corresponding to the \(K\) largest absolute values of \(\hat x^k\), and define
\[
s^k=\sgn(\hat x^k), \qquad
(w^k)_i=
\begin{cases}
1, & i\in\mc I_k,\\
0, & \text{otherwise}.
\end{cases}
\]
Then the selected active gradient is
\[
g^k=\lambda(w^k\circ s^k).
\]
For perturbed DCA, this gives the unique element of \(\mathcal S(\hat x^k)\) almost surely. For hybrid perturbed DCA, the active gradient is likewise constructed at the perturbed point,
\[
g^k=\nabla\psi_{i_k}(\hat x^k),
\qquad
y^k=T(\hat x^k,g^k).
\]
Although the gradient of the selected affine piece is independent of its evaluation point, the proximal center of the hybrid candidate remains \(\hat x^k\). If the candidate is rejected, the fallback DCA step selects an active piece at \(x^k\) and uses \(x^k\) as the proximal center. Note that the computational complexity for constructing the active gradient $g^k$ is $\mc O(n\log n)$.

\subsubsection{A dual semismooth Newton method for solving the subproblems}
The subproblems of the perturbed DCA {and hybrid perturbed DCA} for solving the $K$-sparse regularized linear regression problem~\eqref{eq:dc_ksparse-lr} take the following form (up to some constants independent of $x$)
\begin{equation}
\label{eq: pdca_kparse_sub}
\min_{x \in \mathbb{R}^n} ~ \frac{1}{2}\|Ax\|^2 + \lambda\|x\|_1 + \langle c, x \rangle + \frac{\sigma}{2}\|x - \tilde{x}\|^2,
\end{equation}
where $c \in \mathbb{R}^n$ and $\tilde{x} \in \mathbb{R}^n$ are some given vectors. {For the perturbed DCA candidate and the hybrid perturbed DCA candidate, we set \(\tilde x=\hat x^k\). For the fallback step of hybrid perturbed DCA, we instead set \(\tilde x=x^k\).} The dual problem of \eqref{eq: pdca_kparse_sub} is given by
\begin{equation}
\label{eq: dual_pdca_ksparse}
\min_{z \in \mathbb{R}^m} ~ F(z) := \frac{1}{2}\|z\|^2 - \sigma M_{\frac{\lambda}{\sigma}\|\cdot\|_1}\left(\tilde{x} - \frac{c + A^\top z}{\sigma}\right) - \left(\langle \tilde{x}, c + A^\top z \rangle - \frac{1}{2\sigma}\|c + A^\top z\|^2\right),
\end{equation}
where $M_{\lambda/\sigma\|\cdot\|_1}(\cdot)$ is the Moreau envelope of the convex function $\lambda/\sigma\|\cdot\|_1$. 

Many efficient algorithms can be applied to the convex optimization problems \eqref{eq: pdca_kparse_sub} and \eqref{eq: dual_pdca_ksparse}, such as accelerated first-order methods \cite{nesterov1983method,beck2009fast,sun2025accelerating,zhang2022efficient}. By realizing the dual problem \eqref{eq: dual_pdca_ksparse} admits a unique solution $z^*$ as the solution to the following nonsmooth equation 
\begin{equation}
\label{eq: neq-dualksparse}
\nabla F(z) := z - A\prox_{\frac{\lambda}{\sigma}\|\cdot\|_1}\left(\tilde{x} - \frac{1}{\sigma}\left(c + A^\top z\right)\right) = 0,
\end{equation}
and the solution $x^*$ to \eqref{eq: pdca_kparse_sub} is given by
\begin{align*}
x^* = \prox_{\frac{\lambda}{\sigma}\|\cdot\|_1}\left(\tilde{x} - \frac{1}{\sigma}\left(c + A^\top z^*\right)\right),
\end{align*}
we will apply the globalized semismooth Newton method \cite{li2018highly,facchinei2007finite, lin2024highly} to find the solution $z^*$ to \eqref{eq: neq-dualksparse}, which can exploit the so-called ``second-order sparsity'' of the problem \cite{li2018highly}. Moreover, the solution to \eqref{eq: pdca_kparse_sub} is usually sparse, so we can apply the dimension reduction technique, adaptive sieving, developed in \cite{yuan2025adaptive,yuan2022dimension} to further reduce the computation time.

\subsubsection{Numerical results}

We test the compared algorithms for solving the $K$-sparse regularized linear regression problem under different settings of $(m,n,K)$ and the parameter $\lambda$. The numerical results are reported in Tables~\ref{tab:Ksparse-lambda_0.1} and \ref{tab:Ksparse-lambda_0.05}. {Overall, perturbed DCA and hybrid perturbed DCA are much more efficient than revised DCA and revised DCA-Rand. Revised DCA solves all subproblems associated with the selected $\mc M_\epsilon$, and hence its total number of subproblems is substantially larger. Revised DCA-Rand reduces the number of subproblems by sampling a single index from $\mc M_\epsilon$, but its repeated candidate rejections can increase the iteration number. As a result, both revised DCA and revised DCA-Rand require much longer computation time than the perturbed DCA-type methods on these large-scale sparse regression instances. Hybrid Random Index-DCA is faster than revised DCA and revised DCA-Rand, but it is still slower than perturbed DCA and hybrid perturbed DCA as it solves two subproblems at each iteration.}

\begin{table}[t]
    \centering
    \begin{minipage}{\dimexpr\textwidth-2\fboxsep\relax}
    % \vspace{0.5em}
    \caption{Experimental results on $K$-sparse regularized linear regression problems~\eqref{eq:dc_ksparse-lr} with $\lambda = 1.00 \times 10^{-1}$. Each parameter set $(m,n,K)$ is averaged over 10 trials, and the reported values are ``mean $\pm$ standard deviation''. ``Iter.'' denotes the number of iterations, ``Time'' is the CPU time in seconds (s), ``Subprob.'' denotes the number of subproblems solved, and ``Reject'' denotes the number of rejected candidates.}
    \label{tab:Ksparse-lambda_0.1}
    \resizebox{\textwidth}{!}{%
    \begin{tabular}{c c c l llllll}
        \toprule
        \multirow{3}{*}{\textbf{$m$}} & \multirow{3}{*}{\textbf{$n$}} & \multirow{3}{*}{\textbf{$K$}} & & \multirow{3}{*}{\shortstack[l]{\textbf{Perturbed}\\\textbf{DCA}}} & \multirow{3}{*}{\shortstack[l]{\textbf{Hybrid}\\\textbf{Perturbed}\\\textbf{DCA}}} & \multirow{3}{*}{\textbf{DCA}} & \multirow{3}{*}{\shortstack[l]{\textbf{Revised}\\\textbf{DCA}}} & \multirow{3}{*}{\shortstack[l]{\textbf{Revised}\\\textbf{DCA-Rand}}} & \multirow{3}{*}{\shortstack[l]{\textbf{Hybrid}\\\textbf{Random Index}\\\textbf{DCA}}} \\
        &&&&&&&&& \\ % 留出多行表头空隙
        &&&&&&&&& \\ % 留出多行表头空隙
        \midrule
        \multirow{4}{*}{500} & \multirow{4}{*}{1000} & \multirow{4}{*}{20} 
        & Iter.    & $11 \pm 2$ & $\mathbf{8} \pm 1$ & $9 \pm 2$ & $\mathbf{8} \pm 5$ & $10 \pm 5$ & $\mathbf{8} \pm 1$ \\
        &&& Subprob. & $11 \pm 2$ & $10 \pm 1$ & $\mathbf{9} \pm 2$ & $129 \pm 42$ & $10 \pm 5$ & $16 \pm 2$ \\
        &&& Time     & $0.07 \pm 0.01$ & $\mathbf{0.05} \pm 0.01$ & $\mathbf{0.05} \pm 0.01$ & $1.92 \pm 0.10$ & $0.85 \pm 0.45$ & $0.66 \pm 0.10$ \\
        &&& Reject   & — & $\mathbf{2} \pm 1$ & — & — & $\mathbf{2} \pm 5$ & — \\
        \cmidrule(lr){1-10}
        
        \multirow{4}{*}{500} & \multirow{4}{*}{1000} & \multirow{4}{*}{50} 
        & Iter.    & $13 \pm 3$ & $\mathbf{9} \pm 1$ & $13 \pm 5$ & $10 \pm 1$ & $12 \pm 3$ & $\mathbf{9} \pm 1$ \\
        &&& Subprob. & $13 \pm 3$ & $\mathbf{12} \pm 2$ & $13 \pm 5$ & $174 \pm 83$ & $\mathbf{12} \pm 3$ & $19 \pm 2$ \\
        &&& Time     & $0.09 \pm 0.02$ & $\mathbf{0.07} \pm 0.01$ & $0.08 \pm 0.02$ & $2.11 \pm 0.13$ & $1.02 \pm 0.30$ & $0.82 \pm 0.13$ \\
        &&& Reject   & — & $\mathbf{4} \pm 1$ & — & — & $\mathbf{4} \pm 2$ & — \\
        \cmidrule(lr){1-10}
        
        \multirow{4}{*}{500} & \multirow{4}{*}{1000} & \multirow{4}{*}{100} 
        & Iter.    & $14 \pm 5$ & $11 \pm 1$ & $15 \pm 5$ & $\mathbf{10} \pm 1$ & $17 \pm 8$ & $11 \pm 1$ \\
        &&& Subprob. & $\mathbf{14} \pm 5$ & $\mathbf{14} \pm 2$ & $15 \pm 5$ & $198 \pm 80$ & $17 \pm 8$ & $21 \pm 2$ \\
        &&& Time     & $\mathbf{0.11} \pm 0.02$ & $\mathbf{0.11} \pm 0.03$ & $0.13 \pm 0.02$ & $2.34 \pm 0.16$ & $1.55 \pm 0.80$ & $1.05 \pm 0.16$ \\
        &&& Reject   & — & $\mathbf{3} \pm 1$ & — & — & $6 \pm 7$ & — \\
        \cmidrule(lr){1-10}
        
        \multirow{4}{*}{1000} & \multirow{4}{*}{2000} & \multirow{4}{*}{100} 
        & Iter.    & $15 \pm 6$ & $\mathbf{9} \pm 2$ & $12 \pm 4$ & $10 \pm 1$ & $12 \pm 2$ & $10 \pm 2$ \\
        &&& Subprob. & $15 \pm 6$ & $13 \pm 3$ & $\mathbf{12} \pm 4$ & $143 \pm 42$ & $\mathbf{12} \pm 2$ & $20 \pm 4$ \\
        &&& Time     & $0.19 \pm 0.04$ & $\mathbf{0.15} \pm 0.02$ & $0.16 \pm 0.04$ & $3.76 \pm 0.15$ & $1.86 \pm 0.45$ & $1.58 \pm 0.29$ \\
        &&& Reject   & — & $4 \pm 1$ & — & — & $\mathbf{3} \pm 2$ & — \\
        \cmidrule(lr){1-10}
        
        \multirow{4}{*}{2000} & \multirow{4}{*}{4000} & \multirow{4}{*}{200} 
        & Iter.    & $18 \pm 6$ & $\mathbf{10} \pm 2$ & $17 \pm 7$ & $12 \pm 2$ & $13 \pm 3$ & $\mathbf{10} \pm 2$ \\
        &&& Subprob. & $18 \pm 6$ & $\mathbf{13} \pm 3$ & $17 \pm 7$ & $218 \pm 103$ & $\mathbf{13} \pm 3$ & $21 \pm 4$ \\
        &&& Time     & $0.88 \pm 0.25$ & $\mathbf{0.65} \pm 0.05$ & $0.82 \pm 0.30$ & $13.65 \pm 2.65$ & $3.71 \pm 0.96$ & $3.22 \pm 0.62$ \\
        &&& Reject   & — & $\mathbf{3} \pm 1$ & — & — & $4 \pm 2$ & — \\
        \cmidrule(lr){1-10}
        
        \multirow{4}{*}{5000} & \multirow{4}{*}{10000} & \multirow{4}{*}{500} 
        & Iter.    & $14 \pm 2$ & $\mathbf{11} \pm 1$ & $14 \pm 2$ & $17 \pm 5$ & $18 \pm 6$ & $\mathbf{11} \pm 3$ \\
        &&& Subprob. & $\mathbf{14} \pm 2$ & $\mathbf{14} \pm 2$ & $\mathbf{14} \pm 2$ & $259 \pm 132$ & $18 \pm 6$ & $23 \pm 5$ \\
        &&& Time     & $2.93 \pm 0.54$ & $\mathbf{2.84} \pm 0.27$ & $2.86 \pm 0.45$ & $43.87 \pm 16.91$ & $9.31 \pm 2.85$ & $8.19 \pm 1.65$ \\
        &&& Reject   & — & $\mathbf{3} \pm 1$ & — & — & $6 \pm 3$ & — \\
        \bottomrule
    \end{tabular}%
    }
    % \vspace{0.2em}
    \end{minipage}
\end{table}

\begin{table}[t]
    \centering
    \begin{minipage}{\dimexpr\textwidth-2\fboxsep\relax}
    % \vspace{0.5em}
    \caption{Experimental results on $K$-sparse regularized linear regression problems~\eqref{eq:dc_ksparse-lr} with $\lambda = 5.00 \times 10^{-2}$. Each parameter set $(m,n,K)$ is averaged over 10 trials, and the reported values are ``mean $\pm$ standard deviation''. ``Iter.'' denotes the number of iterations, ``Time'' is the CPU time in seconds (s), ``Subprob.'' denotes the number of subproblems solved, and ``Reject'' denotes the number of rejected candidates.}
    \label{tab:Ksparse-lambda_0.05}
    \resizebox{\textwidth}{!}{%
    \begin{tabular}{c c c l llllll}
        \toprule
        \multirow{3}{*}{\textbf{$m$}} & \multirow{3}{*}{\textbf{$n$}} & \multirow{3}{*}{\textbf{$K$}} & & \multirow{3}{*}{\shortstack[l]{\textbf{Perturbed}\\\textbf{DCA}}} & \multirow{3}{*}{\shortstack[l]{\textbf{Hybrid}\\\textbf{perturbed}\\\textbf{DCA}}} & \multirow{3}{*}{\textbf{DCA}} & \multirow{3}{*}{\shortstack[l]{\textbf{Revised}\\\textbf{DCA}}} & \multirow{3}{*}{\shortstack[l]{\textbf{Revised}\\\textbf{DCA-Rand}}} & \multirow{3}{*}{\shortstack[l]{\textbf{Hybrid}\\\textbf{Random Index}\\\textbf{DCA}}} \\
        &&&&&&&&& \\ % 留出多行表头空隙
        &&&&&&&&& \\ % 留出多行表头空隙
        \midrule
        \multirow{4}{*}{500} & \multirow{4}{*}{1000} & \multirow{4}{*}{20} 
        & Iter.    & $11 \pm 0$ & $\mathbf{8} \pm 0$ & $9 \pm 2$ & $\mathbf{8} \pm 1$ & $13 \pm 9$ & $\mathbf{8} \pm 2$ \\
        &&& Subprob. & $11 \pm 0$ & $11 \pm 1$ & $\mathbf{9} \pm 2$ & $136 \pm 47$ & $13 \pm 9$ & $17 \pm 4$ \\
        &&& Time     & $0.09 \pm 0.02$ & $\mathbf{0.06} \pm 0.03$ & $\mathbf{0.06} \pm 0.02$ & $2.61 \pm 0.23$ & $1.24 \pm 0.80$ & $0.83 \pm 0.26$ \\
        &&& Reject   & — & $\mathbf{3} \pm 1$ & — & — & $5 \pm 9$ & — \\
        \cmidrule(lr){1-10}
        
        \multirow{4}{*}{500} & \multirow{4}{*}{1000} & \multirow{4}{*}{50} 
        & Iter.    & $12 \pm 3$ & $9 \pm 2$ & $10 \pm 2$ & $10 \pm 1$ & $14 \pm 9$ & $\mathbf{8} \pm 1$ \\
        &&& Subprob. & $12 \pm 3$ & $11 \pm 3$ & $\mathbf{10} \pm 2$ & $249 \pm 41$ & $14 \pm 9$ & $17 \pm 1$ \\
        &&& Time     & $0.11 \pm 0.02$ & $\mathbf{0.09} \pm 0.03$ & $0.11 \pm 0.03$ & $3.31 \pm 0.31$ & $1.56 \pm 0.92$ & $0.94 \pm 0.10$ \\
        &&& Reject   & — & $\mathbf{3} \pm 1$ & — & — & $6 \pm 8$ & — \\
        \cmidrule(lr){1-10}
        
        \multirow{4}{*}{500} & \multirow{4}{*}{1000} & \multirow{4}{*}{100} 
        & Iter.    & $15 \pm 5$ & $\mathbf{9} \pm 1$ & $15 \pm 5$ & $12 \pm 3$ & $16 \pm 5$ & $11 \pm 2$ \\
        &&& Subprob. & $15 \pm 5$ & $\mathbf{12} \pm 1$ & $15 \pm 5$ & $166 \pm 66$ & $16 \pm 5$ & $21 \pm 3$ \\
        &&& Time     & $\mathbf{0.15} \pm 0.02$ & $0.16 \pm 0.08$ & $0.17 \pm 0.06$ & $3.40 \pm 0.42$ & $1.75 \pm 0.72$ & $1.27 \pm 0.17$ \\
        &&& Reject   & — & $\mathbf{3} \pm 1$ & — & — & $5 \pm 4$ & — \\
        \cmidrule(lr){1-10}
        
        \multirow{4}{*}{1000} & \multirow{4}{*}{2000} & \multirow{4}{*}{100} 
        & Iter.    & $13 \pm 4$ & $\mathbf{8} \pm 0$ & $10 \pm 3$ & $9 \pm 1$ & $17 \pm 6$ & $9 \pm 1$ \\
        &&& Subprob. & $13 \pm 4$ & $\mathbf{10} \pm 1$ & $\mathbf{10} \pm 3$ & $154 \pm 38$ & $17 \pm 6$ & $18 \pm 3$ \\
        &&& Time     & $\mathbf{0.21} \pm 0.02$ & $\mathbf{0.21} \pm 0.02$ & $0.22 \pm 0.04$ & $5.44 \pm 0.38$ & $3.85 \pm 1.39$ & $2.08 \pm 0.34$ \\
        &&& Reject   & — & $\mathbf{2} \pm 1$ & — & — & $8 \pm 5$ & — \\
        \cmidrule(lr){1-10}
        
        \multirow{4}{*}{2000} & \multirow{4}{*}{4000} & \multirow{4}{*}{200} 
        & Iter.    & $14 \pm 5$ & $\mathbf{9} \pm 2$ & $12 \pm 4$ & $11 \pm 3$ & $16 \pm 8$ & $10 \pm 1$ \\
        &&& Subprob. & $14 \pm 5$ & $\mathbf{12} \pm 3$ & $\mathbf{12} \pm 4$ & $160 \pm 32$ & $16 \pm 8$ & $19 \pm 2$ \\
        &&& Time     & $\mathbf{0.81} \pm 0.18$ & $0.82 \pm 0.12$ & $0.85 \pm 0.20$ & $14.39 \pm 2.08$ & $4.86 \pm 2.13$ & $3.44 \pm 0.39$ \\
        &&& Reject   & — & $\mathbf{3} \pm 1$ & — & — & $7 \pm 7$ & — \\
        \cmidrule(lr){1-10}
        
        \multirow{4}{*}{5000} & \multirow{4}{*}{10000} & \multirow{4}{*}{500} 
        & Iter.    & $12 \pm 1$ & $\mathbf{10} \pm 2$ & $\mathbf{10} \pm 2$ & $\mathbf{10} \pm 2$ & $18 \pm 7$ & $\mathbf{10} \pm 2$ \\
        &&& Subprob. & $12 \pm 1$ & $12 \pm 3$ & $\mathbf{10} \pm 2$ & $233 \pm 137$ & $18 \pm 7$ & $19 \pm 5$ \\
        &&& Time     & $2.87 \pm 0.16$ & $2.90 \pm 0.40$ & $\mathbf{2.48} \pm 0.24$ & $28.51 \pm 2.96$ & $11.48 \pm 4.16$ & $7.69 \pm 1.33$ \\
        &&& Reject   & — & $\mathbf{3} \pm 1$ & — & — & $9 \pm 7$ & — \\
        \bottomrule
    \end{tabular}%
    }
    % \vspace{0.2em}
    \end{minipage}
\end{table}

\subsection{Numerical results on $K$-medians clustering problem}

In this subsection we consider the problem~\eqref{eq:dc_clustering} with $p=1$:
\begin{equation*}
\min_{\mu \in \mathbb{R}^{d \times K}}~\zeta(\mu):= \underbrace{\frac{1}{n}\sum_{i=1}^n \sum_{l=1}^K \|\mu_l - a_i\|_1}_{\phi(\mu)} - \underbrace{\frac{1}{n}\sum_{i = 1}^n \max_{1 \leq j \leq K} \left(\sum_{1 \leq l \neq j \leq K} \|\mu_l - a_i\|_1\right)}_{\psi(\mu)},
\end{equation*}
where the dataset $\{a_1,\ldots,a_n\}\subseteq\mb R^d$ is given.

The experiments are conducted on real-world datasets \texttt{Iris}, \texttt{Wine}, \texttt{Glass}, and \texttt{Yeast} from the UCI Machine Learning Repository\footnote{https://archive.ics.uci.edu/}. 
% We compute an initial candidate solution $\mu_{\text{kmed}}$ using the MATLAB function \texttt{kmedoids} with 5 replicates, evaluate its d-stationarity error through \eqref{eq:d-sta_error}, and use $\mu_{\text{kmed}}$ as the starting point for the pDCA.
We compute an initial candidate solution $\mu_{\text{kmed}}$ using the MATLAB function \texttt{kmedoids} with 5 replicates, {and use the same $\mu_{\text{kmed}}$ as the starting point for all compared algorithms.}

\subsubsection{The solution of subproblems}

Both Algorithm~\ref{alg:pDCA} {and Algorithm~\ref{alg:hybrid-pdca}} lead to subproblems of the following unified form. Given a proximal center \(Z^k\in\mathbb R^{d\times K}\) and a selected active (sub)gradient \(P^k\in\mathbb R^{d\times K}\), we compute
\begin{equation}
\label{eq:kmedians_subproblem}
\widetilde{\mu}^{k+1}
=
\underset{\mu \in \mathbb{R}^{d \times K}}{\arg\min}
\left\{
\phi(\mu)
-
\langle P^k,\mu-Z^k\rangle
+
\frac{\sigma}{2}\|\mu-Z^k\|_F^2
\right\}.
\end{equation}
% For the perturbed candidate in both Algorithm~\ref{alg:pDCA} and Algorithm~\ref{alg:hybrid-pdca}, the selected active (sub)gradient is constructed at the perturbed point and
% \[
% Z^k=\hat\mu^k,
% \qquad
% P^k\in\partial\psi_{i_k}(\hat\mu^k),
% \qquad
% i_k\in\mc M(\hat\mu^k).
% \]
% For the fallback step of Algorithm~\ref{alg:hybrid-pdca}, we instead use
% \[
% Z^k=\mu^k,
% \qquad
% P^k\in\partial\psi_{j_k}(\mu^k),
% \qquad
% j_k\in\mc M(\mu^k).
% \]
Expanding the terms and ignoring constant factors independent of \(\mu\) in~\eqref{eq:kmedians_subproblem}, we obtain
\begin{equation}\label{eq:kmedians_expanded_obj}
    \min_{\mu \in \mathbb{R}^{d \times K}}~ \frac{1}{n} \sum_{i=1}^{n} \sum_{l=1}^{K} \|\mu_l-a_i\|_1 - \langle P^k,\mu\rangle + \frac{\sigma}{2}\|\mu\|_F^2 - \sigma\langle \mu,Z^k\rangle.
\end{equation}
Noting that both the \(\ell_1\)-norm and Frobenius norm are separable across cluster centers and coordinates, we can decompose \eqref{eq:kmedians_expanded_obj} into \(K\times d\) independent one-dimensional optimization problems. For each cluster center \(\mu_l\), \(l=1,\ldots,K\), and each coordinate \(r=1,\ldots,d\), we solve
\begin{equation}
\label{eq:kmedians_1d_subproblem}
    \min_{\mu_l^{(r)}\in\mathbb R}~\frac{1}{n}\sum_{i=1}^{n}|\mu_l^{(r)}-a_i^{(r)}| + \frac{\sigma}{2}(\mu_l^{(r)})^2 - c_l^{(r)}\mu_l^{(r)},
\end{equation}
where
\[
c_l^{(r)} = P_l^{k,(r)}+\sigma Z_l^{k,(r)}
\]
with $P_l^{k,(r)}$ and $Z_l^{k,(r)}$ being the $(r,l)$-entry of $P^k$ and $Z^k$, respectively.

\paragraph{Solution of the one-dimensional subproblem}
The subproblem \eqref{eq:kmedians_1d_subproblem} is a strictly convex piecewise quadratic function of a one-dimensional variable $x \triangleq \mu_l^{(r)}\in\mb R$. Its global minimizer can be found efficiently and exactly. For notational simplicity, we denote the fixed data points for this subproblem as $b_i \triangleq a_i^{(r)}$ for $i=1,\ldots,n$, and the constant $c \triangleq c_l^{(r)}$. The objective in \eqref{eq:kmedians_1d_subproblem} is then
\begin{equation}
f(x) = \frac{1}{n}\sum_{i=1}^{n} |x - b_i| + \frac{\sigma}{2} x^2 - c x.
\label{eq:1d_objective}
\end{equation}
A standard approach to solve \eqref{eq:1d_objective} exploits its piecewise linear derivative structure. Let $b_{(1)} \le b_{(2)} \le \cdots \le b_{(n)}$ be the sorted values of $\{b_i\}_{i=1}^n$. For any $x$ not equal to any $b_i$, the derivative of $f(x)$ is
\begin{equation*}
f'(x) = \frac{1}{n} \sum_{i=1}^{n} \sgn(x - b_i) + \sigma x - c.
% \label{eq:derivative}
\end{equation*}
In the open interval $(b_{(m)}, b_{(m+1)})$ between two consecutive sorted points, the sum of signs is constant: $\sum_{i=1}^{n} \sgn(x - b_i) = 2m - n$. Setting $f'(x)=0$ within this interval yields a candidate solution
\begin{equation}
x_m = \frac{c - (2m - n)/n}{\sigma}, \quad \text{provided } b_{(m)} < x_m < b_{(m+1)}.
\label{eq:candidate_solution}
\end{equation}
Since $f(x)$ is strictly convex, its unique global minimizer $x^*$ must either be one of these interval-stationary points $x_m$ satisfying the condition in \eqref{eq:candidate_solution}, or coincide with one of the data points $b_i$ where $f(x)$ is non-differentiable. Therefore, the exact solution can be obtained by the following procedure:
\begin{enumerate}
    \item[Step 1.] Sort the data points $\{b_i\}_{i=1}^n$ to obtain $b_{(1)} \le \cdots \le b_{(n)}$.
    \item[Step 2.] For each $m = 0, 1, \ldots, n$ (defining $b_{(0)} = -\infty$ and $b_{(n+1)} = \infty$), compute the candidate $x_m$ using \eqref{eq:candidate_solution} and check if $b_{(m)} < x_m < b_{(m+1)}$.
    \item[Step 3.] Evaluate the objective value $f(x)$ at all valid candidates $x_m$ from step 2 and at all data points $b_i$.
    \item[Step 4.] The point achieving the smallest value of $f(x)$ is the global minimizer $x^*$.
\end{enumerate}

\begin{table}[p] % 独占一页
    \centering
    % 1. 首先强行逆时针旋转90度
    \rotatebox[origin=c]{90}{% 
    % 2. 这里的 minipage 宽度与整体缩放尺寸保持一致
    \begin{minipage}{\dimexpr\textheight-2\fboxsep\relax} 
    \vspace{0.5em}
    \caption{Experimental results of $K$-medians clustering problems on four real-world datasets. Each algorithm was run for 10 independent trials on each dataset, and the reported values are ``mean $\pm$ standard deviation''. ``Succ.'' denotes the number of successful trials, i.e., achieving $\mc R(x^k)<\tau$ at termination, ``Iter.'' denotes the number of iterations, ``Obj.'' denotes the final objective function value $\zeta$, ``Subprob.'' denotes the number of subproblems, and ``Time'' denotes the CPU time in seconds.}
    \label{tab:kmedians}    
    % 3. 加入 \resizebox，让表格本体自动缩放到贴合页面文本高度
    \resizebox{\linewidth}{!}{% 
    \begin{tabular}{l l llllll}
        \toprule
        \multirow{2}{*}{\shortstack[l]{\textbf{Dataset}\\$(n,d,K)$}} & & \multirow{2}{*}{\shortstack[l]{\textbf{Perturbed}\\\textbf{DCA}}} & \multirow{2}{*}{\shortstack[l]{\textbf{Hybrid} \\ \textbf{Perturbed DCA}}} & \multirow{2}{*}{\textbf{DCA}} & \multirow{2}{*}{\shortstack[l]{\textbf{Revised} \\ \textbf{DCA}}} & \multirow{2}{*}{\shortstack[l]{\textbf{Revised} \\ \textbf{DCA-Rand}}} & \multirow{2}{*}{\shortstack[l]{\textbf{Hybrid}\\\textbf{Random Index-DCA}}} \\
        &&&&&&& \\ 
        \midrule
        \multirow{5}{*}{\shortstack[l]{\texttt{Iris}\\$(150,4,3)$}} 
        & Succ.    & \textbf{10}/10 & \textbf{10}/10 & \textbf{10}/10 & \textbf{10}/10 & \textbf{10}/10 & \textbf{10}/10 \\
        & Iter.    & $5.8\pm2.1$ & $4.9\pm0.9$ & $4.2\pm0.6$ & $\textbf{4.1}\pm0.3$ & $4.9\pm2.8$ & $\textbf{4.1}\pm0.3$ \\
        & Obj.     & $\mathbf{1.065\mathrm{e}{+00}}\pm8.767\mathrm{e}{-03}$ & $\mathbf{1.065\mathrm{e}{+00}}\pm9.404\mathrm{e}{-03}$ & $\mathbf{1.065\mathrm{e}{+00}}\pm8.644\mathrm{e}{-03}$ & $\mathbf{1.065\mathrm{e}{+00}}\pm8.644\mathrm{e}{-03}$ & $\mathbf{1.065\mathrm{e}{+00}}\pm8.644\mathrm{e}{-03}$ & $\mathbf{1.065\mathrm{e}{+00}}\pm8.644\mathrm{e}{-03}$ \\
        & Subprob. & $5.8\pm2.1$ & $6.6\pm1.3$ & $\textbf{4.2}\pm0.6$ & $7.2\pm7.0$ & $4.9\pm2.8$ & $8.2\pm0.6$ \\
        & Time     & $\textbf{0.01}\pm0.00$ & $\textbf{0.01}\pm0.00$ & $\textbf{0.01}\pm0.00$ & $\textbf{0.01}\pm0.00$ & $\textbf{0.01}\pm0.01$ & $\textbf{0.01}\pm0.00$ \\
        \cmidrule(lr){1-8}
        
        \multirow{5}{*}{\shortstack[l]{\texttt{Wine}\\$(178,13,3)$}} 
        & Succ.    & \textbf{10}/10 & \textbf{10}/10 & \textbf{10}/10 & \textbf{10}/10 & \textbf{10}/10 & \textbf{10}/10 \\
        & Iter.    & $17.4\pm2.8$ & $17.8\pm2.1$ & $\textbf{16.0}\pm0.0$ & $\textbf{16.0}\pm0.0$ & $\textbf{16.0}\pm0.0$ & $\textbf{16.0}\pm0.0$ \\
        & Obj.     & $\mathbf{1.065\mathrm{e}{+02}}\pm2.096\mathrm{e}{-03}$ & $\mathbf{1.065\mathrm{e}{+02}}\pm4.264\mathrm{e}{-04}$ & $\mathbf{1.065\mathrm{e}{+02}}\pm1.872\mathrm{e}{-13}$ & $\mathbf{1.065\mathrm{e}{+02}}\pm1.872\mathrm{e}{-13}$ & $\mathbf{1.065\mathrm{e}{+02}}\pm1.872\mathrm{e}{-13}$ & $\mathbf{1.065\mathrm{e}{+02}}\pm1.872\mathrm{e}{-13}$ \\
        & Subprob. & $17.4\pm2.8$ & $19.9\pm2.4$ & $\textbf{16.0}\pm0.0$ & $\textbf{16.0}\pm0.0$ & $\textbf{16.0}\pm0.0$ & $32.0\pm0.0$ \\
        & Time     & $\textbf{0.02}\pm0.00$ & $\textbf{0.02}\pm0.00$ & $\textbf{0.02}\pm0.00$ & $\textbf{0.02}\pm0.00$ & $\textbf{0.02}\pm0.00$ & $0.03\pm0.00$ \\
        \cmidrule(lr){1-8}
        
        \multirow{5}{*}{\shortstack[l]{\texttt{Glass}\\$(214,9,6)$}} 
        & Succ.    & \textbf{10}/10 & \textbf{10}/10 & \textbf{10}/10 & \textbf{10}/10 & \textbf{10}/10 & \textbf{10}/10 \\
        & Iter.    & $20.2\pm4.9$ & $22.7\pm8.8$ & $\textbf{15.0}\pm0.0$ & $\textbf{15.0}\pm0.0$ & $\textbf{15.0}\pm0.0$ & $\textbf{15.0}\pm0.0$ \\
        & Obj.     & $\mathbf{1.949\mathrm{e}{+00}}\pm1.101\mathrm{e}{-03}$ & $\mathbf{1.949\mathrm{e}{+00}}\pm1.343\mathrm{e}{-03}$ & $1.954\mathrm{e}{+00}\pm1.256\mathrm{e}{-14}$ & $1.954\mathrm{e}{+00}\pm1.256\mathrm{e}{-14}$ & $1.954\mathrm{e}{+00}\pm1.256\mathrm{e}{-14}$ & $1.954\mathrm{e}{+00}\pm1.256\mathrm{e}{-14}$ \\
        & Subprob. & $20.2\pm4.9$ & $25.0\pm10.2$ & $\textbf{15.0}\pm0.0$ & $\textbf{15.0}\pm0.0$ & $\textbf{15.0}\pm0.0$ & $30.0\pm0.0$ \\
        & Time     & $\textbf{0.04}\pm0.01$ & $\textbf{0.04}\pm0.02$ & $\textbf{0.04}\pm0.00$ & $\textbf{0.04}\pm0.00$ & $\textbf{0.04}\pm0.00$ & $0.06\pm0.00$ \\
        \cmidrule(lr){1-8}
        
        \multirow{5}{*}{\shortstack[l]{\texttt{Yeast}\\$(1484,8,10)$}} 
        & Succ.    & \textbf{10}/10 & \textbf{10}/10 & 7/10 & \textbf{10}/10 & 5/10 & 7/10 \\
        & Iter.    & $17.1\pm5.6$ & $13.9\pm4.3$ & $21.3\pm24.7$ & $\textbf{5.2}\pm0.4$ & $30.7\pm26.5$ & $20.9\pm24.5$ \\
        & Obj.     & $\mathbf{3.014\mathrm{e}{-01}}\pm3.671\mathrm{e}{-04}$ & $\mathbf{3.014\mathrm{e}{-01}}\pm4.560\mathrm{e}{-04}$ & $3.056\mathrm{e}{-01}\pm4.044\mathrm{e}{-04}$ & $3.055\mathrm{e}{-01}\pm4.308\mathrm{e}{-04}$ & $3.056\mathrm{e}{-01}\pm4.128\mathrm{e}{-04}$ & $3.056\mathrm{e}{-01}\pm4.135\mathrm{e}{-04}$ \\
        & Subprob. & $17.1\pm5.6$ & $\textbf{15.6}\pm4.8$ & $21.3\pm24.7$ & $520.0\pm42.2$ & $30.7\pm26.5$ & $41.8\pm48.9$ \\
        & Time     & $8.99\pm4.21$ & $\textbf{8.29}\pm2.87$ & $95.98\pm140.95$ & $34.97\pm2.37$ & $153.10\pm154.93$ & $95.72\pm141.05$ \\
        \bottomrule
    \end{tabular}%
    }
    \vspace{0.5em}
    \end{minipage}%
    }
\end{table}

\subsubsection{Numerical results}
Table~\ref{tab:kmedians} presents {the number of successful trials, the number of iterations, the final objective value, the number of subproblems, and CPU time} of the compared algorithms for the $K$-medians model across four UCI datasets.
% {The results show that pDCA and hybrid pDCA obtain competitive or better objective function values on all tested datasets. In particular, on the Yeast dataset, hybrid pDCA obtains the smallest objective value among all compared methods. The two ``Failed'' entries in Table~\ref{tab:kmedians} correspond to different stopping mechanisms: DCA stops because it fails to further reduce both the relative error and the d-stationarity error before finding a d-stationary point, while revised DCA-Rand stops because it reaches the limit of 200 consecutive resampling trials without producing an actual update.}
{On the relatively simple Iris, Wine, and Glass datasets, all perturbed DCA-type methods exhibit competitive performance. On the Yeast dataset, which contains many ties, the perturbed DCA and hybrid perturbed DCA maintain a 100\% success rate while hybrid perturbed DCA requires the least computation time and has the lowest objective value (tied with perturbed DCA). Revised DCA also attains a 100\% success rate, but it solves substantially more subproblems and therefore takes more time. DCA performs well when it succeeds, yet it can terminate without satisfying the d-stationarity criterion. Finally, Revised DCA-Rand and Hybrid Random Index-DCA depend on the randomly selected indices from \(\mathcal M_\epsilon\). When $\mc M_\epsilon$ (or \(\mathcal M\)) is large, identifying a useful direction can require many sampled attempts, so the time limit may be reached first.} These results illustrate the {robustness of the proposed perturbed DCA-type methods} for computing high-quality d-stationary solutions in the $K$-medians clustering problem.

\section{Conclusion}\label{sec:conclusion}
In this paper, we proposed a simple yet effective algorithm called perturbed DCA for computing d-stationary points of nonsmooth DC programs in the form of \eqref{eq:dc_general}. Benefiting from the tiny random perturbations at each iteration, the active gradient set of the concave component is a singleton almost surely. This allows perturbed DCA to solve a single strongly convex subproblem for updating the current solution in each iteration with comparable per-iteration computational cost to DCA. {We further introduced a hybrid variant of perturbed DCA that independently samples the perturbation radius and direction with a safeguard using a proximal DCA step. Under some practical assumptions}, every accumulation point of the sequence generated by the (hybrid) perturbed DCA is a d-stationary point of the nonsmooth DC program \eqref{eq:dc_general} almost surely. The efficiency of the proposed methods for computing d-stationary points has been demonstrated numerically. Whether the entire sequence generated by perturbed DCA or hybrid perturbed DCA converges almost surely to a d-stationary point under practical assumptions remains an open question, which we leave for future work. 
% Currently, it is not clear to us whether we can get almost-sure convergence of the whole sequence generated by pDCA {and hybrid pDCA} to a d-stationary point of \eqref{eq:dc_general} under practical assumptions. We leave this as our future research topic.

\section*{Acknowledgments}
The authors would like to thank Professor Jong-Shi Pang at the University of Southern California for his helpful discussions and suggestions on a preliminary draft of this paper.

\bibliography{reference}{}

@article{pang2017computing,
  title={Computing {B}-stationary points of nonsmooth {DC} programs},
  author={Pang, Jong-Shi and Razaviyayn, Meisam and Alvarado, Alberth},
  journal={Mathematics of Operations Research},
  volume={42},
  number={1},
  pages={95--118},
  year={2017},
  publisher={INFORMS}
}

@misc{lethi2026finding,
  title={Finding directional stationary points of {DC} programs},
  author={Le Thi, Hoai An and Huynh, Van Ngai and Pham, Dinh Tao},
  year={2026},
  note={arXiv preprint arXiv:2605.15838},
  url={https://arxiv.org/abs/2605.15838}
}

@article{ahn2017dclearning,
  title={Difference-of-convex learning: Directional stationarity, optimality, and sparsity},
  author={Ahn, Miju and Pang, Jong-Shi and Xin, Jack},
  journal={SIAM Journal on Optimization},
  volume={27},
  number={3},
  pages={1637--1665},
  year={2017},
  publisher={SIAM}
}

@article{zhang2010cappedl1,
  title={Analysis of multi-stage convex relaxation for sparse regularization},
  author={Zhang, Tong},
  journal={Journal of Machine Learning Research},
  volume={11},
  number={3},
  pages={1081--1107},
  year={2010}
}

@article{cui2018composite,
  title={Composite difference-max programs for modern statistical estimation problems},
  author={Cui, Ying and Pang, Jong-Shi and Sen, Bodhisattva},
  journal={SIAM Journal on Optimization},
  volume={28},
  number={4},
  pages={3344--3374},
  year={2018},
  publisher={SIAM}
}

@article{horst1999dcoverview,
  title={{DC} programming: Overview},
  author={Horst, Reiner and Thoai, Nguyen V},
  journal={Journal of Optimization Theory and Applications},
  volume={103},
  number={1},
  pages={1--43},
  year={1999},
  publisher={Springer}
}

@article{le2018dc30years,
  title={{DC} programming and {DCA}: Thirty years of developments},
  author={Le Thi, Hoai An and Pham, Dinh Tao},
  journal={Mathematical Programming},
  volume={169},
  number={1},
  pages={5--68},
  year={2018},
  publisher={Springer}
}

@article{le2024open,
  title={Open issues and recent advances in {DC} programming and {DCA}},
  author={Le Thi, Hoai An and Pham, Dinh Tao},
  journal={Journal of Global Optimization},
  volume={88},
  number={3},
  pages={533--590},
  year={2024},
  publisher={Springer}
}

@article{tao1997convex,
  title={Convex analysis approach to {DC} programming: Theory, algorithms and applications},
  author={Pham, Dinh Tao and Le Thi, Hoai An},
  journal={Acta Mathematica Vietnamica},
  volume={22},
  number={1},
  pages={289--355},
  year={1997}
}

@article{gotoh2018dcsparse,
  title={{DC} formulations and algorithms for sparse optimization problems},
  author={Gotoh, Jun-ya and Takeda, Akiko and Tono, Katsuya},
  journal={Mathematical Programming},
  volume={169},
  number={1},
  pages={141--176},
  year={2018},
  publisher={Springer}
}

@article{sun2024hybrid,
  title={Hybrid algorithms for finding a d-stationary point of a class of structured nonsmooth {DC} minimization},
  author={Sun, Zhe and Wu, Lei},
  journal={SIAM Journal on Optimization},
  volume={34},
  number={1},
  pages={485--506},
  year={2024},
  publisher={SIAM}
}

@article{fan2001variable,
  title={Variable selection via nonconcave penalized likelihood and its oracle properties},
  author={Fan, Jianqing and Li, Runze},
  journal={Journal of the American Statistical Association},
  volume={96},
  number={456},
  pages={1348--1360},
  year={2001},
  publisher={Taylor \& Francis}
}

@article{zhang2010nearly,
  title={Nearly unbiased variable selection under minimax concave penalty},
  author={Zhang, Cun-Hui},
  journal={The Annals of Statistics},
  volume={38},
  number={2},
  pages={894--942},
  year={2010}
}

@book{cui2021modern,
  title={Modern Nonconvex Nondifferentiable Optimization},
  author={Cui, Ying and Pang, Jong-Shi},
  year={2021},
  publisher={SIAM}
}

@article{an2007new,
  title={A new efficient algorithm based on {DC} programming and {DCA} for clustering},
  author={Le Thi, Hoai An and Belghiti, M Tayeb and Pham, Dinh Tao},
  journal={Journal of Global Optimization},
  volume={37},
  number={4},
  pages={593--608},
  year={2007},
  publisher={Springer}
}

@book{rockafellar1970convex,
  title={Convex Analysis},
  author={Rockafellar, Ralph Tyrell},
  year={1970},
  publisher={Princeton University Press}
}

@article{Rademacher1919,
  author  = {Rademacher, Hans},
  title   = {{\"U}ber partielle und totale Differenzierbarkeit von Funktionen mehrerer Variabeln und {\"u}ber die Transformation der Doppelintegrale},
  journal = {Mathematische Annalen},
  year    = {1919},
  volume  = {79},
  number  = {4},
  pages   = {340--359}
}

@book{rw1998,
  title={Variational Analysis},
  author={Rockafellar, R Tyrrell and Wets, Roger J-B},
  year={1998},
  publisher={Springer},
  address={New York}
}

@article{tao1998dctrustregion,
  title={A {DC} optimization algorithm for solving the trust-region subproblem},
  author={Pham, Dinh Tao and Le Thi, Hoai An},
  journal={SIAM Journal on Optimization},
  volume={8},
  number={2},
  pages={476--505},
  year={1998},
  publisher={SIAM}
}

@article{wen2018pdcae,
  title={A proximal difference-of-convex algorithm with extrapolation},
  author={Wen, Bo and Chen, Xiaojun and Pong, Ting Kei},
  journal={Computational Optimization and Applications},
  volume={69},
  number={2},
  pages={297--324},
  year={2018},
  publisher={Springer}
}

@article{pang2018decomposition,
  title={Decomposition methods for computing directional stationary solutions of a class of nonsmooth nonconvex optimization problems},
  author={Pang, Jong-Shi and Tao, Min},
  journal={SIAM Journal on Optimization},
  volume={28},
  number={2},
  pages={1640--1669},
  year={2018},
  publisher={SIAM}
}

@article{zhang2026data,
  title={Data-driven piecewise affine decision rules for stochastic programming with covariate information},
  author={Zhang, Yiyang and Liu, Junyi and Zhao, Xiaobo},
  journal={Operations Research},
  year={2026},
  doi={10.1287/opre.2023.0175},
  note={Published online June 10, 2026}
}

@article{lu2019enhanced,
  title={Enhanced proximal {DC} algorithms with extrapolation for a class of structured nonsmooth {DC} minimization},
  author={Lu, Zhaosong and Zhou, Zirui and Sun, Zhe},
  journal={Mathematical Programming},
  volume={176},
  number={1},
  pages={369--401},
  year={2019},
  publisher={Springer}
}

@article{an2005dc,
  title={The {DC} (difference of convex functions) programming and {DCA} revisited with {DC} models of real world nonconvex optimization problems},
  author={Le Thi, Hoai An and Pham, Dinh Tao},
  journal={Annals of Operations Research},
  volume={133},
  number={1},
  pages={23--46},
  year={2005},
  publisher={Springer}
}

@book{facchinei2007finite,
  title={Finite-Dimensional Variational Inequalities and Complementarity Problems},
  author={Facchinei, Francisco and Pang, Jong-Shi},
  year={2007},
  publisher={Springer Science \& Business Media}
}

@article{li2018highly,
  title={A highly efficient semismooth {N}ewton augmented {L}agrangian method for solving {L}asso problems},
  author={Li, Xudong and Sun, De Feng and Toh, Kim-Chuan},
  journal={SIAM Journal on Optimization},
  volume={28},
  number={1},
  pages={433--458},
  year={2018},
  publisher={SIAM}
}

@article{yuan2025adaptive,
  title={Adaptive sieving: A dimension reduction technique for sparse optimization problems},
  author={Yuan, Yancheng and Lin, Meixia and Sun, De Feng and Toh, Kim-Chuan},
  journal={Mathematical Programming Computation},
  volume={17},
  pages={585--616},
  year={2025}
}

@article{yuan2022dimension,
  title={A dimension reduction technique for large-scale structured sparse optimization problems with application to convex clustering},
  author={Yuan, Yancheng and Chang, Tsung-Hui and Sun, De Feng and Toh, Kim-Chuan},
  journal={SIAM Journal on Optimization},
  volume={32},
  number={3},
  pages={2294--2318},
  year={2022},
  publisher={SIAM}
}

@article{sun2025accelerating,
  title={Accelerating preconditioned {ADMM} via degenerate proximal point mappings},
  author={Sun, De Feng and Yuan, Yancheng and Zhang, Guojun and Zhao, Xinyuan},
  journal={SIAM Journal on Optimization},
  volume={35},
  number={2},
  pages={1165--1193},
  year={2025},
  publisher={SIAM}
}

@misc{zhang2022efficient,
  title={An efficient {HPR} algorithm for the {Wasserstein} barycenter problem with ${O}(\operatorname{Dim}({P})/\varepsilon)$ computational complexity},
  author={Zhang, Guojun and Yuan, Yancheng and Sun, De Feng},
  year={2022},
  note={arXiv preprint arXiv:2211.14881},
  url={https://arxiv.org/abs/2211.14881}
}

@article{nesterov1983method,
  title={A method for unconstrained convex minimization problem with the rate of convergence {O}(1/$k^2$)},
  author={Nesterov, Yurii},
  journal={Doklady AN SSSR},
  volume={269},
  number={3},
  pages={543--547},
  year={1983}
}

@article{beck2009fast,
  title={A fast iterative shrinkage-thresholding algorithm for linear inverse problems},
  author={Beck, Amir and Teboulle, Marc},
  journal={SIAM Journal on Imaging Sciences},
  volume={2},
  number={1},
  pages={183--202},
  year={2009},
  publisher={SIAM}
}

@incollection{robbins1971rslemma,
  title={A convergence theorem for nonnegative almost supermartingales and some applications},
  author={Robbins, Herbert and Siegmund, David},
  booktitle={Optimizing Methods in Statistics},
  pages={233--257},
  year={1971},
  publisher = {Academic Press}
}

@book{durrett2019probability,
  title={Probability: Theory and Examples},
  author={Durrett, Rick},
  year={2019},
  publisher={Cambridge University Press}
}

@article{lin2024highly,
  title={A highly efficient algorithm for solving exclusive {L}asso problems},
  author={Lin, Meixia and Yuan, Yancheng and Sun, De Feng and Toh, Kim-Chuan},
  journal={Optimization Methods and Software},
  volume={39},
  number={3},
  pages={489--518},
  year={2024},
  publisher={Taylor \& Francis}
}

@article{de2019proximal,
  title={Proximal bundle methods for nonsmooth {DC} programming},
  author={de Oliveira, Welington},
  journal={Journal of Global Optimization},
  volume={75},
  number={2},
  pages={523--563},
  year={2019},
  publisher={Springer}
}

@article{van2019non,
  title={Non-smooth {DC}-constrained optimization: Constraint qualification and minimizing methodologies},
  author={van Ackooij, Wim and de Oliveira, Welington},
  journal={Optimization Methods and Software},
  volume={34},
  number={4},
  pages={890--920},
  year={2019},
  publisher={Taylor \& Francis}
}

@article{burke2005gs,
  title={A robust gradient sampling algorithm for nonsmooth, nonconvex optimization},
  author={Burke, James V and Lewis, Adrian S and Overton, Michael L},
  journal={SIAM Journal on Optimization},
  volume={15},
  number={3},
  pages={751--779},
  year={2005},
  publisher={SIAM}
}

@article{davis2022gs,
  title={A gradient sampling method with complexity guarantees for {L}ipschitz functions in high and low dimensions},
  author={Davis, Damek and Drusvyatskiy, Dmitriy and Lee, Yin Tat and Padmanabhan, Swati and Ye, Guanghao},
  journal={Advances in Neural Information Processing Systems},
  volume={35},
  pages={6692--6703},
  year={2022}
}
\bibliographystyle{siam}
\end{document}